\documentclass[a4paper,11pt]{amsart}

\usepackage{epsfig}
\usepackage{amsthm,amsfonts}
\usepackage{amssymb,graphicx,color}
\usepackage[all]{xy}
\usepackage{verbatim}
\usepackage{hyperref}
\usepackage{enumitem}

\newtheorem{theorem}{Theorem}[section]
\newtheorem*{theorem*}{Theorem}
\newtheorem{lemma}[theorem]{Lemma}
\newtheorem{corollary}[theorem]{Corollary}
\newtheorem{proposition}[theorem]{Proposition}
\newtheorem{definition}[theorem]{Definition}

\newtheorem{example}[theorem]{Example}
\newtheorem{remark}[theorem]{Remark}

\newcommand{\R}{\mathbb{R}}
\newcommand{\C}{\mathbb{C}}

\newcommand{\N}{\mathbb{N}}
\newcommand{\din}{\mathrm{d}_{\mathrm{inn}}}
\newcommand{\dout}{\mathrm{d}_{\mathrm{out}}}

\newtheorem{innercustomthm}{Theorem}
\newenvironment{customthm}[1]
  {\renewcommand\theinnercustomthm{#1}\innercustomthm}
  {\endinnercustomthm}
  
\newtheorem{innercustomcor}{Corollary}
\newenvironment{customcor}[1]
  {\renewcommand\theinnercustomcor{#1}\innercustomcor}
  {\endinnercustomcor}

\title[Rigidity of algebraic sets with small distortion]{Rigidity of complex algebraic sets with small distortion}

\author[Alexandre Fernandes]{Alexandre Fernandes}
\address{Departamento de Matem\'atica, Universidade Federal do Cear\'a, Fortaleza, CE, Brazil}
\email{alex@mat.ufc.br}

\author[Edson Sampaio]{Edson Sampaio}
\address{Departamento de Matem\'atica, Universidade Federal do Cear\'a, Fortaleza, CE, Brazil}
\email{edsonsampaio@mat.ufc.br}

\date{\today}

\subjclass[2020]{14H50, 32S05, 14P10, 58K45}
\thanks{The first named author was partially supported by CNPq-Brazil grant  304700/2021-5 and 405618/2023-9. The second named author was partially supported by CNPq-Brazil grants 303375/2025-6, 404611/2025-7 and 405618/2023-9 and by the Serrapilheira Institute (grant number Serra -- R-2110-39576).
}
\keywords{Lipschitz geometry, Lipschitz normally embedded, algebraic curves, complex affine curves}

\begin{document}

\begin{abstract}
We prove that for positive integers $k$, $n$, and $d>1$, there is a maximal $c(k,n,d)>1$ such that if $X \subset \mathbb{C}^n$ is a connected $k$-dimensional complex algebraic set with degree at most $d$ and $dist(X) < c(k,n,d)$, then $X$ must be a complex affine linear subspace, where $dist(X)$ denotes the distortion of $X$ and is given by the supremum of the ratios $d_{inner}(p, q)/d_{outer}(p, q)$ for all $p,q\in X$ with $p\not= q$. We also prove that $c(1,n,d)=\sqrt{2}$ for any  positive integers $k$ and $n$. This result provides a sharp rigidity phenomenon for curves in the $c$-LNE category, showing that near-isometric behavior of the inner and outer metrics forces the curve to be as simple as possible.
\end{abstract}

\maketitle

\tableofcontents
\section{Introduction}\label{sec:intro}

The study of Lipschitz geometry of singular spaces has attracted considerable attention in recent years, particularly through the notion of \emph{Lipschitz Normal Embedding} (LNE). A seminal question in this area is to understand which algebraic or semialgebraic sets satisfy the property that their inner metric is bi-Lipschitz equivalent to their outer (ambient) metric. This notion was introduced by Birbrair and Mostowski~\cite{birbrair-mostowski} in the context of semialgebraic sets, where they established fundamental properties of normally embedded sets. The concept has since become a central object in the Lipschitz geometry of singularities, with deep connections to the topology and classification of singular spaces; see the survey by Fantini and Pichon~\cite{fantini-pichon} and the monograph~\cite{fernandes-sampaio-book} for a comprehensive account. Among the key developments, Kurdyka and Orro~\cite{kurdyka-orro} studied geodesic distances on subanalytic sets, providing essential tools for the metric analysis of singular spaces, and Fernandes and Sampaio~\cite{fernandes-sampaio-imrn} proved that tangent cones of LNE sets are themselves LNE (with an appendix by Pichon and Neumann).

A natural refinement of LNE is the concept of \emph{$c$-Lipschitz Normal Embedding}, where one controls the bi-Lipschitz constant $c \geq 1$ quantitatively. In the extremal case $c = 1$, the inner and outer metrics coincide, and the set must be convex (in the case of connected subsets of $\R^n$). As $c$ grows, more complicated geometries become possible. A fundamental question is:

\medskip
\noindent\textbf{Question.} \textit{For which values of $c$ does the $c$-LNE condition force strong geometric rigidity on algebraic sets?}
\medskip

In this paper, we show that such a constant $c>1$ exists for complex algebraic sets in general, and we answer this question for complex algebraic curves by presenting an explicitly constant in this case. 

Our first main result is the following:

\begin{customthm}{\ref*{thm:universal_maximal_distortion}}
For positive integers $k$, $n$, and $d$, there is a maximal $c(k,n,d)>1$ such that if $X \subset \C^n$ is a connected $k$-dimensional complex algebraic set with degree at most $d$ and $dist(X) < c(k,n,d)$, then $X$ must be a complex affine linear subspace.
\end{customthm}

When one restricts to the class of algebraic sets that have the tangent cone at infinity as a union of linear subspaces, we obtain an explicit and sharp constant.
\begin{customthm}{\ref*{thm:tang_cone_union_linear_spaces}}
Let $X \subset \C^n$ be a connected $d$-dimensional complex algebraic set such that its tangent cone at infinity, $C(X,\infty)$, is a union of complex linear subspaces. If $dist(X) < \sqrt{2}$, then $X$ must be a complex affine linear subspace.
\end{customthm}

Since the tangent cone at infinity of a complex algebraic curve is a union of complex lines, we obtain the following direct consequence. 
\begin{customcor}{\ref*{cor:algebraic_curves}}
Let $X \subset \C^n$ be a connected algebraic affine complex curve. If $dist(X) < \sqrt{2}$, then $X$ is a 
complex line.  
\end{customcor}

This result reveals a sharp rigidity phenomenon: any connected algebraic curve in~$\C^n$ that is $c$-LNE with $c < \sqrt{2}$ must be the simplest possible curve, namely an affine complex line. In particular, no irreducible curve of degree 
$\geq 2$ and no reducible curve can satisfy the $c$-LNE condition with such a 
small constant.

\subsection*{Strategy of the proof}

The proof relies on two main ingredients. First, Theorem~\ref{thm:c-lne-tangent-cone-infinity} (a version at infinity of \cite[Theorem~5.2]{fernandes-sampaio-book}) shows that the tangent cone at infinity $C(X, \infty)$ inherits the $c$-LNE constant. Since $C(X, \infty)$ is a complex algebraic cone of dimension one, it is a finite union of complex lines through the origin. The computation in Example~\ref{ex:two-lines-sharp} then shows that two distinct lines always have $c$-LNE constant $\geq \sqrt{2}$, so the hypothesis $c < \sqrt{2}$ forces $C(X, \infty)$ to be a single line. The Lipschitz rigidity result of \cite{fernandes-sampaio-jga} then implies that $X$ itself must be an affine linear subspace.



\section{Preliminaries}\label{sec:prelim}

\subsection{Inner and outer metrics}

Let $X \subset \C^n$ be a connected set. The \emph{outer metric} (or 
\emph{ambient metric}) on $X$ is the restriction of the Euclidean metric on 
$\C^n$:
\[
\dout(x, y) = \|x - y\|, \quad x, y \in X.
\]
The \emph{inner metric} (or \emph{length metric}) on $X$ is defined by
\[
\din(x, y) = \inf_\gamma \operatorname{length}(\gamma),
\]
where the infimum is taken over all rectifiable paths $\gamma \colon [0,1] \to X$ 
joining $x$ to $y$. If no such path exists, we set $\din(x, y) = +\infty$.

It is immediate that $\dout(x,y) \leq \din(x,y)$ for all $x, y \in X$.

\subsection{Lipschitz normal embedding}

\begin{definition}\label{def:c-lne}
Let $c \geq 1$. A connected subset $X \subset \C^n$ is said to be 
\emph{$c$-Lipschitz Normally Embedded} (or \emph{$c$-LNE}) if
\[
\din(x, y) \leq c \cdot \dout(x, y) \quad \text{for all } x, y \in X.
\]
When such a constant $c$ exists (without specifying its value), $X$ is called 
\emph{Lipschitz Normally Embedded} (LNE).
\end{definition}

\begin{remark}
The condition $c = 1$ means $\din = \dout$ on $X$, which, for connected compact subsets of Euclidean space, implies convexity. Thus, the $c$-LNE condition interpolates between convexity and general LNE.
\end{remark}

\begin{example}\label{ex:basic}
Every affine linear subspace of $\C^n$ is $1$-LNE (being convex). Every smooth compact submanifold of $\R^n$ is LNE, though the optimal constant may be strictly greater than~$1$.
\end{example}

\begin{example}\label{ex:two-lines-sharp}
	Let $L_1, L_2 \subset \C^n$ be two distinct complex lines through the 
	origin, and let $X = L_1 \cup L_2$. Since $L_1 \cap L_2 = \{0\}$, any 
	rectifiable path in $X$ connecting a point $x \in L_1 \setminus \{0\}$ to 
	a point $y \in L_2 \setminus \{0\}$ must pass through the origin. Hence
	\[
	\din(x, y) = \|x\| + \|y\|, \quad x \in L_1,\ y \in L_2.
	\]
	We now compute the sharp $c$-LNE constant of $X$. Choose unit vectors 
	$u_0 \in L_1$ and $v_0 \in L_2$, and write $x = \lambda u_0$, 
	$y = \mu v_0$ with $\lambda, \mu \in \C^*$. Then
	\[
	c^2 = \sup_{\lambda, \mu \in \C^*} 
	\frac{(|\lambda| + |\mu|)^2}
	{|\lambda|^2 + |\mu|^2 - 2\operatorname{Re}(\bar{\lambda}\mu\,
		\langle u_0, v_0 \rangle)}.
	\]
	Writing $\lambda = s e^{i\alpha}$ and $\mu = t e^{i\beta}$ with $s, t > 0$, 
	the denominator becomes $s^2 + t^2 - 2st\operatorname{Re}(e^{i(\beta - \alpha)}
	\langle u_0, v_0\rangle)$. This is minimized when 
	$\operatorname{Re}(e^{i(\beta - \alpha)}\langle u_0, v_0\rangle)$ is maximized, 
	which equals $|\langle u_0, v_0\rangle|$. Setting $r = t/s$, we thus need to 
	maximize
	\[
	f(r) = \frac{(1 + r)^2}{1 + r^2 - 2r\,|\langle u_0, v_0\rangle|}
	\]
	over $r > 0$. A direct computation gives
	\[
	f'(r) = \frac{2(1 + r)(1 - r)(1 + |\langle u_0, v_0\rangle|)}
	{(1 + r^2 - 2r\,|\langle u_0, v_0\rangle|)^2},
	\]
	which vanishes at $r = 1$ and is positive for $r < 1$, negative for $r > 1$. 
	The maximum is therefore achieved at $r = 1$ (i.e., $\|x\| = \|y\|$), giving
	\[
	c^2 = f(1) = \frac{4}{2 - 2\,|\langle u_0, v_0\rangle|} 
	= \frac{2}{1 - \cos\theta},
	\]
	where $\theta \in (0, \pi/2]$ is the Hermitian angle between $L_1$ and $L_2$, 
	defined by $\cos\theta = |\langle u_0, v_0\rangle|$.
	
	In particular, the smallest $c$-LNE constant among all pairs of distinct 
	complex lines is $c^2 = 2$, attained when $L_1 \perp L_2$. This shows that 
	the bound $c^2 < 2$ in Theorem~\ref{thm:tang_cone_union_linear_spaces} is sharp: the union of two 
	orthogonal complex lines is a connected algebraic affine complex curve that 
	is $\sqrt{2}$-LNE but is not a complex line.
\end{example}

\subsection{Tangent cones}\label{subsec:tangent-cones}

We recall the notion of the tangent cone of a subset of $\mathbb{K}^m$ (where 
$\mathbb{K}$ is $\R$ or $\C$) at infinity, following 
\cite{fernandes-sampaio-book}.

\begin{definition}\label{def:tangent-cone-infinity}
Let $X \subset \R^m$ be a non-empty set. We say that $v \in \R^m$ is a 
\emph{tangent vector of $X$ at infinity} if there exist a sequence of points 
$\{x_i\} \subset X$ tending to infinity and a sequence of positive real 
numbers $\{t_i\}$ such that
\[
\lim_{i \to \infty} \frac{1}{t_i}(x_i) = v.
\]
We denote by $C(X, \infty)$ the set of all tangent vectors of $X$ at infinity 
and call it the \emph{tangent cone of $X$ at infinity}.
\end{definition}

The tangent cone at infinity has the following fundamental property.

\begin{proposition}\label{prop:tangent-cone-infinity-algebraic}
Let $X \subset \C^n$ be a complex algebraic set. Then $C(X, \infty)$ is a 
complex algebraic set and $\dim_{\C} C(X, \infty) = \dim_{\C} X$. Moreover, $C(X, \infty)$ is a complex cone in the sense that: $x\in C(X, \infty)$ implies that the complex line through $0$ and $x$ also belongs to $C(X, \infty)$.
\end{proposition}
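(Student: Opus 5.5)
The plan is to realize $C(X,\infty)$ as the tangent cone at the origin of an algebraic set obtained by inversion, or equivalently as the set of points at infinity of the projective closure. Concretely, I will use the projective closure $\overline{X}\subset \mathbb{P}^n$ and the hyperplane at infinity $H_\infty=\{x_0=0\}$. Let $I\subset \C[x_1,\dots,x_n]$ be the ideal of $X$, and for $f\in I$ let $f^*$ denote its homogeneous part of highest degree. The candidate description is
\[
C(X,\infty)=V(\{f^* : f\in I\})\subset \C^n,
\]
which is the affine cone over $\overline{X}\cap H_\infty$. Once this is established, algebraicity is immediate. The complex cone property also follows, since the ideal $(f^*)$ is homogeneous and its zero set is therefore invariant under multiplication by every $\lambda\in\C$.

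The first inclusion, $C(X,\infty)\subset V(f^*)$, is elementary. Take $x_i\in X$ with $x_i\to\infty$ and $x_i/t_i\to v$. If $v=0$ there is nothing to show, so assume $v\neq 0$; then $t_i\to\infty$. Let $e=\deg f$ and divide $0=f(x_i)$ by $t_i^{e}$. Every lower-degree term tends to zero, so $f^*(v)=0$.

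The reverse inclusion is the real content. Take $v\neq0$ with $f^*(v)=0$ for all $f\in I$. Then $[0:v]\in \overline{X}\cap H_\infty$, because the ideal of $\overline{X}$ is generated by the homogenizations $f^h$, and $f^h(0,v)=f^*(v)$. Since $\overline X$ is the Zariski closure of $X$, and in the complex case this agrees with the Euclidean closure (a standard fact, e.g.\ via Chevalley or the curve selection lemma), there are points $[1:x_i]\to[0:v]$ with $x_i\in X$. Normalising representatives, $(1,x_i)/\|(1,x_i)\|\to e^{i\phi}(0,v/\|v\|)$ for some phase $\phi$. This gives $x_i\to\infty$ and $x_i/\|x_i\|\to e^{i\phi}v/\|v\|$. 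So $e^{i\phi}v$ is a tangent vector at infinity. Because the right-hand side is $\C^*$-invariant, the left-hand side then contains all of $\C v$ once I also know that $C(X,\infty)$ is closed under complex scaling. I will get that scaling invariance from the equality itself: the inclusion just proved shows that $V(f^*)$ minus the origin lies in $\bigcup_\phi e^{i\phi}C(X,\infty)$. To get $v$ itself, I will apply the same argument to the point $[0:e^{-i\phi}v]$. A cleaner route avoids this bookkeeping: choose the affine representative $(1/s_i, x_i/s_i)$ with $s_i$ a complex scalar chosen so that $x_i/s_i\to v$, which the convergence in projective space permits, and then take $t_i=|s_i|$ after absorbing the phase into $x_i$. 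I expect this phase issue, together with citing the equality of Zariski and Euclidean closures, to be the main technical point.

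For the dimension, $\overline X\cap H_\infty$ is a hyperplane section of $\overline X$. No irreducible component of $\overline X$ lies in $H_\infty$, and every component of positive dimension $k$ meets $H_\infty$ in pure dimension $k-1$; for this I use projective dimension theory, noting that an affine component of positive dimension is unbounded and so meets infinity. Hence the projective set $\overline X\cap H_\infty$ has dimension $\dim X-1$, and its affine cone has dimension $\dim X$. The case $\dim X=0$, where $X$ is finite and $C(X,\infty)=\emptyset$ under the convention that sequences must tend to infinity, I would treat as degenerate or exclude.
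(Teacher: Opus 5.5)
Your route is the one the paper relies on. The paper defers the proof to \cite{fernandes-sampaio-book}, but in the proof of Theorem~\ref{thm:universal_maximal_distortion} it uses exactly this identification of $C(X,\infty)$ with the affine cone over $\overline{X}\cap H_\infty$, and the same dimension count. Your first inclusion, the identity $f^h(0,v)=f^*(v)$, the dimension count, and your caveat that bounded $X$ must be excluded are all fine.

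\textbf{The gap.} It is in the reverse inclusion, exactly at the phase issue you flagged, and neither of your repairs closes it. Density of $X$ in $\overline X$ only produces $x_i\in X$ with $x_i/\|x_i\|\to e^{i\phi}v/\|v\|$ for some phase $\phi$ that you do not control. This shows that every complex line through $0$ in $V(\{f^*\})$ contains a nonzero point of $C(X,\infty)$; it does not show that the whole line lies in $C(X,\infty)$.
\begin{itemize}
\item Rerunning the argument at $[0:e^{-i\phi}v]$ does nothing, since that is the same point $[0:v]$ of $\mathbb{P}^n$; you again get an uncontrolled phase.
\item Changing the complex representative $s_i$ does not help either, because the limit direction of $x_i/\|x_i\|$ is intrinsic to the sequence. ``Absorbing the phase into $x_i$'' means replacing $x_i$ by $e^{-i\arg s_i}x_i$, which need not lie in $X$, since $X$ is not invariant under $x\mapsto e^{i\theta}x$.
\item Getting the scaling invariance ``from the equality itself'' is circular.
\end{itemize}
You derive algebraicity, the complex-cone property and the dimension all from that equality, so all three conclusions rest on this missing step. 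In fact, invariance of $C(X,\infty)$ under unit complex scalars is the real content of the proposition.

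\textbf{The missing idea.} Approach $[0:v]$ along a holomorphic arc instead of an arbitrary sequence, so that the complex parameter can absorb the phase.
\begin{itemize}
\item Take an irreducible component $W$ of $\overline X$ through $[0:v]$; it is not contained in $H_\infty$.
\item Cut $W$ by a generic linear subspace through $[0:v]$ of codimension $\dim W-1$. This gives a curve through $[0:v]$ not contained in $H_\infty$.
\item A Puiseux parametrization of one of its branches yields a holomorphic map $\tau\mapsto[a(\tau):b(\tau)]\in\overline X$ on a small disc, with $a(0)=0$, $b(0)=v$ and $a(\tau)\neq0$ for $\tau\neq0$.
\item Write $a(\tau)=\tau^m u(\tau)$ with $m\geq1$ and $u(0)\neq0$. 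The change of parameter $\sigma=\tau\,u(\tau)^{1/m}$ is a local biholomorphism with $\sigma^m=a$.
\item Then $x(\sigma)=b(\sigma)/\sigma^m\in X$ for $\sigma\neq0$. Along real $\sigma=s\to0^+$ you get $\|x(s)\|\to\infty$ and $x(s)/t_s=b(s)\to v$ with $t_s=s^{-m}>0$.
\end{itemize}
Thus $v$ itself is a tangent vector at infinity, and varying $\arg\sigma$ gives every $e^{i\psi}v$ directly. This yields $V(\{f^*\})\setminus\{0\}\subset C(X,\infty)$, and with this step in place the rest of your argument goes through.
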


For a proof of Propositions~\ref{prop:tangent-cone-infinity-algebraic}, 
we refer the reader to \cite{fernandes-sampaio-book}.

\subsection{LNE and tangent cones at infinity}\label{subsec:lne-tangent-infinity}

The following result is the version at infinity of 
\cite[Theorem~5.2]{fernandes-sampaio-book}. The proof follows the 
same strategy.

\begin{theorem}\label{thm:c-lne-tangent-cone-infinity}
Let $X$ be a subset of $\R^m$ that is definable in some o-minimal structure on $\R$. If $X$ is $c$-Lipschitz normally 
embedded, then $C(X, \infty)$ is $c$-Lipschitz normally embedded.
\end{theorem}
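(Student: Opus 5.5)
The plan is to realize $C(X,\infty)$ as a limit of rescalings $X_t = \frac{1}{t}X$ as $t\to+\infty$. Each $X_t$ is $c$-LNE with the same constant, since homotheties scale inner and outer distances by the same factor. The inner metric of the limit cone is then controlled by inner distances on $X_t$ for large $t$. This is the argument of \cite[Theorem~5.2]{fernandes-sampaio-book}, transplanted from the origin to infinity.

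\emph{Step 1 (approximation).} Fix $v,w\in C(X,\infty)$ and $\varepsilon>0$. By o-minimality (curve selection at infinity), there are definable arcs $\alpha,\beta\colon(t_0,+\infty)\to X$ with $\alpha(t)/t\to v$ and $\beta(t)/t\to w$. For $v=0$ or $w=0$ we may use $\alpha(t)=o(t)$, e.g.\ a fixed point of $X$.

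\emph{Step 2 (transfer of the LNE inequality).} Since $X$ is $c$-LNE, there are paths $\gamma_t\subset X$ joining $\alpha(t)$ to $\beta(t)$ with
\[
\operatorname{length}(\gamma_t)\le c\|\alpha(t)-\beta(t)\|+\varepsilon t .
\]
Rescaling, $\sigma_t=\frac1t\gamma_t$ is a path in $X_t$ from $\alpha(t)/t$ to $\beta(t)/t$ of length at most $c\|v-w\|+2\varepsilon$ for $t$ large. The images of the $\sigma_t$ lie in a fixed ball. Parametrize them proportionally to arc length on $[0,1]$. They are then uniformly Lipschitz, and by Arzel\`a--Ascoli a subsequence $\sigma_{t_j}$ converges uniformly to a path $\sigma$ from $v$ to $w$. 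Lower semicontinuity of length gives $\operatorname{length}(\sigma)\le c\|v-w\|+2\varepsilon$.

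\emph{Step 3 (the limit lies in the cone).} Every point of $\sigma$ is a limit of points $\gamma_{t_j}(s_j)/t_j$ with $\gamma_{t_j}(s_j)\in X$. If those points tend to infinity, the limit is a tangent vector at infinity by Definition~\ref{def:tangent-cone-infinity}. If they stay bounded, the limit is $0$, which belongs to $C(X,\infty)$ when $X$ is unbounded; if $X$ is bounded the cone is empty and the statement is vacuous. Hence $\sigma\subset C(X,\infty)$, so $\din^{C(X,\infty)}(v,w)\le c\|v-w\|+2\varepsilon$. Letting $\varepsilon\to0$ proves the claim.

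I expect the main obstacle to be Step 3 together with the compactness in Step 2. One must check that the Hausdorff limit of the sets $X_t$ is contained in $C(X,\infty)$, and that the limiting path is genuinely rectifiable with the length bound. In the o-minimal setting, the definable choice of $\gamma_t$ and the closedness of the tangent cone handle this. The point needing most care is that points of $\gamma_t$ of bounded norm rescale to $0$, which must lie in the cone.
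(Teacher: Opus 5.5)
Your proposal is correct and follows essentially the same route as the paper: take near-optimal paths in $X$ between points realizing $v$ and $w$ at a common scale, rescale by $1/t$, extract a uniformly convergent subsequence by Arzel\`a--Ascoli, and bound the length of the limit path by lower semicontinuity. Your Step~1 and Step~3 also make explicit two points that the paper's proof asserts without comment: Step~1 uses curve selection at infinity, which is where o-minimality is genuinely needed to get a \emph{common} scaling parameter for $v$ and $w$, and Step~3 checks that the limit path lies in $C(X,\infty)$, including the case of limit points equal to $0$.
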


\begin{proof}
Let $c \geq 1$ be such that
\[
\din(x, y) \leq c\,\|x - y\| \quad \text{for all } x, y \in X.
\]

Let $v, w \in C(X, \infty)$. We show that 
$d_{C(X,\infty)}(v, w) \leq c\,\|v - w\|$.

Let $\varepsilon > 0$ and set $K > c + 1 + \varepsilon$. We claim that 
for any $x, y \in X$, any rectifiable arc $\alpha$ on $X$ connecting $x$ 
to $y$ and satisfying
\[
\operatorname{Length}(\alpha) \leq \din(x, y) + \varepsilon\,\|x - y\|
\]
is contained in the closed ball 
$\overline{B}(0, K \cdot \max\{\|x\|, \|y\|\})$. Indeed, if $\alpha$ exits 
this ball, it must reach a point at distance at least 
$K\max\{\|x\|, \|y\|\}$ from the origin, and then return. The total length 
of such an arc is at least
\[
\bigl(K\max\{\|x\|,\|y\|\} - \|x\|\bigr) 
+ \bigl(K\max\{\|x\|,\|y\|\} - \|y\|\bigr) 
\geq 2(K - 1)\max\{\|x\|, \|y\|\}.
\]
On the other hand, the near-optimality condition and the triangle inequality 
$\|x - y\| \leq \|x\| + \|y\| \leq 2\max\{\|x\|, \|y\|\}$ give
\[
\operatorname{Length}(\alpha) 
\leq (c + \varepsilon)\,\|x - y\|
\leq 2(c + \varepsilon)\max\{\|x\|, \|y\|\}.
\]
Since $K > c + 1 + \varepsilon$, we have $2(K - 1) > 2(c + \varepsilon)$, 
yielding a contradiction. This proves the claim.

By definition of the tangent cone at infinity, there exist sequences 
$\{x_n\}$ and $\{y_n\}$ in $X$ and a sequence of positive real numbers 
$\{t_n\}$ with $t_n \to +\infty$ such that
\[
\frac{x_n}{t_n} \to v \quad \text{and} \quad \frac{y_n}{t_n} \to w.
\]
For each $n$, let $\gamma_n \colon [0, 1] \to X$ be a curve connecting 
$x_n$ to $y_n$ satisfying
\[
\operatorname{Length}(\gamma_n) \leq \din(x_n, y_n) 
+ \varepsilon\,\|x_n - y_n\|.
\]
Set $M_n := \max\{\|x_n\|, \|y_n\|\}$. By the claim above, $\gamma_n$ is 
contained in $\overline{B}(0, KM_n)$. Define 
$\alpha_n \colon [0, 1] \to \R^m$ by $\alpha_n = \frac{1}{t_n}\gamma_n$. 
Then $\alpha_n$ is contained in $\overline{B}(0, KM_n/t_n)$. Since 
$M_n/t_n \to \max\{\|v\|, \|w\|\}$, the curves $\alpha_n$ are eventually 
contained in a fixed bounded set. Moreover,
\[
\ell_n := \operatorname{Length}(\alpha_n) 
= \frac{1}{t_n}\operatorname{Length}(\gamma_n)
\leq \frac{c + \varepsilon}{t_n}\|x_n - y_n\|
= (c + \varepsilon)\left\|\frac{x_n}{t_n} - \frac{y_n}{t_n}\right\|.
\]
Since $\frac{1}{t_n}\|x_n - y_n\| \to \|v - w\|$, the sequence 
$\{\ell_n\}$ is bounded. By extracting a subsequence if necessary, we may 
assume $\ell_n \leq \ell := (c + \varepsilon)\|v - w\| + 1$ for all $n$.

For each $n$, let $\widetilde{\alpha}_n \colon [0, \ell_n] \to \R^m$ be the 
arc-length reparametrization of $\alpha_n$, and define 
$\beta_n \colon [0, \ell] \to \R^m$ by
\[
\beta_n(t) = 
\begin{cases}
\widetilde{\alpha}_n(t), & \text{if } t \in [0, \ell_n], \\
\widetilde{\alpha}_n(\ell_n), & \text{if } t \in [\ell_n, \ell].
\end{cases}
\]
Since $\widetilde{\alpha}_n$ is parametrized by arc length, it holds for 
all $n$ that
\[
\|\beta_n(s) - \beta_n(t)\| \leq |s - t| \quad \text{for all } s, t \in [0, \ell].
\]
Thus the family $\{\beta_n\}$ is equicontinuous and uniformly bounded. By 
the Arzel\`{a}--Ascoli theorem, there exists a continuous arc 
$\alpha \colon [0, \ell] \to C(X, \infty)$ such that, up to a subsequence, 
$\beta_{n_k} \rightrightarrows \alpha$.

Let us estimate the length of $\alpha$. Since $\varepsilon > 0$, there 
exists a positive integer $k_0$ such that, for all $k > k_0$,
\[
\operatorname{Length}(\beta_{n_k}) 
\leq (c + \varepsilon)\,\|v - w\| + \varepsilon,
\]
hence $\operatorname{Length}(\alpha) \leq (c + \varepsilon)\,\|v - w\| 
+ \varepsilon$, and therefore 
$d_{C(X,\infty)}(v, w) \leq (c + \varepsilon)\,\|v - w\| + \varepsilon$. 
Since $\varepsilon > 0$ was arbitrary, we conclude that
\[
d_{C(X,\infty)}(v, w) \leq c\,\|v - w\|. \qedhere
\]
\end{proof}

\section{The main results}\label{sec:main-result}

Before stating our main results, we present a geometric rigidity property of complex algebraic sets in the extremal case $c = 1$, which serves as both motivation and a key ingredient.

\begin{proposition}\label{prop:1-lne-rigidity}
Let $X \subset \C^m$ be a complex algebraic set. If $dist(X)=1$, that is, $\din(x, y) = \|x - y\|$ for all $x, y \in X$, then $X$ is an affine linear subspace of $\C^m$.
\end{proposition}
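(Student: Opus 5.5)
If $\din=\dout$ on $X$, then for any two points $x,y\in X$ the segment $[x,y]$ lies in $X$, so $X$ is convex. We then show that a convex complex algebraic set is an affine linear subspace. (The hypothesis $dist(X)=1$ implicitly requires $X$ connected; otherwise $\din=+\infty$ for some pair.)

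\emph{Step 1: convexity.} Fix $x\neq y$ in $X$ and $\varepsilon>0$. Choose rectifiable paths $\gamma_\varepsilon\subset X$ from $x$ to $y$ with $\operatorname{length}(\gamma_\varepsilon)\le \|x-y\|+\varepsilon$. Every point $z$ of $\gamma_\varepsilon$ satisfies $\|x-z\|+\|z-y\|\le\|x-y\|+\varepsilon$. Hence $\gamma_\varepsilon$ lies in a thin ellipsoid with foci $x,y$, and these ellipsoids shrink to the segment $[x,y]$ as $\varepsilon\to0$. Since $\gamma_\varepsilon$ is connected and joins $x$ to $y$, its orthogonal projection onto the line through $x,y$ covers $[x,y]$. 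So every point of $[x,y]$ is a limit of points of $X$. Because $X$ is closed, $[x,y]\subset X$, and $X$ is convex.

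\emph{Step 2: convex algebraic sets are affine subspaces.} Let $A$ be the affine span of $X$ over $\R$, and fix $x\neq y\in X$. The complex affine line $\ell$ through $x$ and $y$ meets $X$ in an algebraic subset of $\ell\cong\C$. This subset contains the real segment $[x,y]$, which is infinite, so it is all of $\ell$. Thus $X$ contains every complex line through two of its points. Take $x_0\in X$ and set $V=\{v: x_0+v\in X\}$. Then $V$ is convex, contains $0$, and $\C$-stable: for $v\in V$, the complex line through $x_0$ and $x_0+v$ lies in $X$. For $v,w\in V$ we have $2v,2w\in V$, so by convexity $v+w=\tfrac12(2v)+\tfrac12(2w)\in V$. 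Therefore $V$ is a complex linear subspace, and $X=x_0+V$ is a complex affine linear subspace.

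The main obstacle is Step 1: making rigorous that near-minimizing paths converge to the segment. The ellipsoid argument combined with closedness of $X$ handles it without any Arzelà--Ascoli compactness.
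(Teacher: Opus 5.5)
Your proof is correct. It has the same two-step structure as the paper: first $\din=\dout$ gives convexity, then the identity principle on complex lines turns a convex algebraic set into an affine subspace. The real difference is in Step~1.

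The paper takes arc-length parametrizations of near-minimizing paths and extracts a uniformly convergent subsequence by Arzel\`{a}--Ascoli. To identify the limit as the segment (which then lies in $X$ by closedness), it implicitly needs lower semicontinuity of length and the equality case ``length $=\|x-y\|$ only for the segment.'' Your argument uses only three things: the triangle inequality along the path ($\|x-z\|+\|z-y\|\le\operatorname{length}(\gamma_\varepsilon)$), connectedness of the projected image, and closedness of $X$. It is shorter and fully elementary. The paper's version has the advantage of being in the same compactness language it reuses later, in Theorem~\ref{thm:c-lne-tangent-cone-infinity} and Step~2 of the proof of Proposition~\ref{prop:cones}.

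Two small precision points in Step~1:
\begin{enumerate}
\item The projection must be onto the \emph{real} affine line through $x$ and $y$. Onto the complex line, which is a real $2$-plane, connectedness alone would not force the image to cover $[x,y]$.
\item The phrase ``the ellipsoids shrink to $[x,y]$'' should be made quantitative. If $z\in\gamma_\varepsilon$ projects to $t\in[x,y]$, Minkowski's inequality applied to the orthogonal decomposition gives $\|x-y\|^2+4\|z-t\|^2\le(\|x-y\|+\varepsilon)^2$. Hence $\|z-t\|\to0$, which is exactly what closedness of $X$ needs.
\end{enumerate}

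In Step~2, your derivation of additivity via $v+w=\tfrac12(2v)+\tfrac12(2w)$ spells out what the paper leaves as ``it follows.'' The sentence introducing the real affine span $A$ is never used and can be deleted.
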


\begin{proof}
The proof proceeds in two steps.

\medskip
\noindent\textbf{Step~1: $X$ is convex.}
Let $x, y \in X$. Since $\din(x, y) = \|x - y\|$, for each $n \in \N$, there 
exists a rectifiable path $\gamma_n \colon [0, 1] \to X$ connecting $x$ to $y$ 
with
\[
\operatorname{Length}(\gamma_n) 
\leq \|x - y\| + \frac{1}{n}.
\]
Recall that for any rectifiable path $\gamma$ in $\R^{2m}$ joining $x$ to $y$,
\[
\operatorname{Length}(\gamma) 
\geq  \|x - y\|,
\]
with equality if and only if $\gamma$ is a reparametrization of the 
straight-line segment $[x, y]$. Let $\widetilde{\gamma}_n$ denote the 
arc-length reparametrization of $\gamma_n$. Since the lengths converge 
to $\|x - y\|$ and the curves are uniformly bounded, by the 
Arzel\`{a}--Ascoli theorem, a subsequence converges uniformly to a path 
$\gamma \colon [0, \|x-y\|] \to X$ of length exactly $\|x - y\|$. Since 
$X$ is closed and $\gamma$ achieves the minimal possible length, $\gamma$ 
must parametrize the segment $[x, y]$. In particular, $[x, y] \subset X$. 
Since $x, y \in X$ were arbitrary, $X$ is convex.

\medskip
\noindent\textbf{Step~2: $X$ is an affine linear subspace.}
Since $X$ is convex, for any $x, y \in X$, the real segment $[x, y]$ is 
contained in~$X$. We claim that the entire complex affine line through $x$ 
and $y$ is also contained in $X$.

Indeed, $X$ is defined by polynomial equations: $X = V(f_1, \ldots, f_k)$ for 
some polynomials $f_j \colon \C^m \to \C$. Fix $x, y \in X$ with $x \neq y$ 
and consider the complex line $L = \{x + z(y - x) \mathrel{;} z \in \C\}$. 
The restriction $g_j(z) := f_j(x + z(y - x))$ is a polynomial in 
$z \in \C$. Since $[x, y] \subset X$, we have $g_j(t) = 0$ for all 
$t \in [0, 1] \subset \R$. But a polynomial in one complex variable that 
vanishes on an infinite subset of $\C$ (here, the uncountable set $[0,1]$) 
must be identically zero. Therefore $g_j \equiv 0$, which means $L \subset 
V(f_j)$. Since this holds for all $j$, we conclude $L \subset X$.

Hence, for any two points $x, y \in X$, the complex affine line through $x$ 
and $y$ lies entirely in $X$. Fixing a point $p \in X$, the set 
$X - p := \{x - p \mathrel{;} x \in X\}$ is a complex algebraic set 
containing the origin that is closed under complex linear combinations: if 
$u, w \in X - p$ and $\lambda \in \C$, then the complex line through $u$ 
and $w$ in $X - p$ gives $u + \lambda(w - u) \in X - p$. It follows that 
$X - p$ is a complex linear subspace of $\C^m$, and therefore 
$X = p + (X - p)$ is a complex affine linear subspace.
\end{proof}

\subsection{Rigidity for small distortion for sets in general}

\begin{theorem}\label{thm:universal_maximal_distortion}
For positive integers $k$, $n$, and $d$, there is a maximal $c(k,n,d)>1$ such that if $X \subset \C^n$ is a connected $k$-dimensional complex algebraic set with degree at most $d$ and $dist(X) < c(k,n,d)$, then $X$ must be a complex affine linear subspace.
\end{theorem}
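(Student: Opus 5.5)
Define $c(k,n,d)$ as the infimum of $dist(X)$ over all connected $k$-dimensional complex algebraic sets $X\subset\C^n$ of degree at most $d$ that are not affine linear subspaces (with $c=+\infty$ if there are none). By definition, every such $X$ with $dist(X)<c(k,n,d)$ is an affine subspace, and no larger constant has this property, so $c(k,n,d)$ is maximal. The whole content of the theorem is therefore the strict inequality $c(k,n,d)>1$, and I would prove it by contradiction combined with a compactness argument.

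Suppose there is a sequence $X_j$ of connected $k$-dimensional algebraic sets of degree $\le d$, none of them affine subspaces, with $dist(X_j)\to 1$. The plan is to pass to tangent cones at infinity. By Theorem~\ref{thm:c-lne-tangent-cone-infinity}, $dist(C(X_j,\infty))\le dist(X_j)$. By Proposition~\ref{prop:tangent-cone-infinity-algebraic}, each $C_j:=C(X_j,\infty)$ is a $k$-dimensional complex cone, and its degree is at most $d$; I would justify the degree bound via the projective closure, since $C_j$ is the cone over $\overline{X_j}\cap H_\infty$. I then want $C_j$ not to be a linear subspace. If $C_j$ were a $k$-plane, then $X_j$ would have degree one at infinity, hence degree one, and would be an affine subspace. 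This follows from the rigidity result of \cite{fernandes-sampaio-jga} invoked in the strategy section, or directly from the equality $\deg X=\deg C(X,\infty)$ for pure-dimensional sets. One point needs care: $X_j$ may fail to be pure-dimensional. In that case I would first argue that lower-dimensional components, or positive-dimensional components of different dimensions, also force large distortion, or I would simply assume pure dimension as part of the definition of ``$k$-dimensional''.

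It thus suffices to show that non-linear complex cones of dimension $k$ and degree $\le d$ in $\C^n$ have distortion bounded away from $1$. These cones are determined by their links in $S^{2n-1}$, or equivalently by points of a Chow variety, which is compact. After passing to a subsequence, the $C_j$ converge in the Hausdorff sense on the unit ball to a cone $C$ of dimension $k$ and degree $\le d$, which is a limit of algebraic cycles. The key step is lower semicontinuity of the LNE constant: if $dist(C_j)\to1$, then $dist(C)=1$. This is proved by the same Arzelà--Ascoli argument as in Theorem~\ref{thm:c-lne-tangent-cone-infinity}, taking near-geodesics in $C_j$ between points approximating $v,w\in C$, which stay in a bounded ball. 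Proposition~\ref{prop:1-lne-rigidity} then shows that $C$ is a $k$-plane $P$, counted with some multiplicity $m\le d$ as a cycle.

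The main obstacle is to derive a contradiction from $C_j\to P$ while $C_j$ is not a plane. If $m=1$, then $\deg C_j=1$ for large $j$ by continuity of degree in the Chow variety, so $C_j$ is a plane, which is a contradiction. If $m\ge2$, the cones $C_j$ collapse onto $P$ while having degree $\ge2$. To handle this, I would rescale transversally: pick points of $C_j$ in the unit sphere that are nearly orthogonal to $P$ in some direction, or compare two sheets of $C_j$ over a generic point of $P$. Two distinct sheets over $P$ that are close in outer distance but must be joined through the singular locus, whose codimension is at least one, give a ratio bounded below. More concretely, I would first try intersecting with a generic affine $(n-k+1)$-plane through the origin to reduce to the case $k=1$. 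There each $C_j$ is a union of at least two lines, so by Example~\ref{ex:two-lines-sharp} its distortion is at least $\sqrt2$, a contradiction. The delicate point is whether such a slicing preserves a bound on distortion; if it does not, I would instead use the local conic structure near a smooth point of the reduced limit.
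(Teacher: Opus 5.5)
\textbf{There is a genuine gap, at the step you yourself call the main obstacle.} Your reduction to cones and your compactness/semicontinuity step follow the paper's Steps~1--3. There is a Hausdorff limit on the unit ball, Arzel\`a--Ascoli gives distortion $1$ in the limit, and the limit is therefore a $k$-plane $P$.

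The concrete route you propose for the case of several sheets cannot work. Slicing by a generic $(n-k+1)$-plane $V$ through $0$ gives inequalities in the wrong direction. Since $C_j\cap V\subset C_j$, you only get $d_{C_j}\le d_{C_j\cap V}$ on the slice, so $dist(C_j\cap V)\ge\sqrt2$ says nothing about $dist(C_j)$. In fact the conclusion this route would produce is false: it would say every non-linear cone has distortion at least $\sqrt2$. The quadric cone $\{x^2+y^2+z^2=0\}\subset\C^3$ is cut by a generic plane through $0$ in two distinct lines. Yet its distortion is $\sqrt2\sin(\pi/(2\sqrt2))<\sqrt2$, by the last proposition of the paper. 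Your case $m=1$ is vacuous, since the Chow limit has the same degree $e\ge2$ as $C_j$. So the whole difficulty sits in the case you did not settle.

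What is needed is a quantitative form of your ``two sheets over a generic point of $P$'' idea, and the missing ingredient is uniformity in $j$.
\begin{enumerate}
\item Once $\mathrm{d}(x,P)\le\varepsilon_j\|x\|$ on $C_j$, the orthogonal projection $\pi\colon C_j\to P$ is a finite branched covering with $e\ge2$ sheets.
\item Two points of $C_j$ over the same $p\in P$ with $\|p\|=1$ are at outer distance $O(\varepsilon_j)$.
\item You need a ball $B(p,\delta)\subset P$, with $\delta>0$ \emph{independent of $j$}, free of branch points of $\pi|_{C_j}$ for all large $j$. Then the covering is trivial over $B(p,\delta)$.
\item Any path joining two distinct sheets must then leave $\pi^{-1}(B(p,\delta))$, so it has length $\ge2\delta$. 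This gives $dist(C_j)\ge\delta\sqrt{1-\varepsilon_j^2}/\varepsilon_j\to\infty$.
\end{enumerate}
The paper obtains $\delta$ from the degree bound. The discriminants of $\pi|_{C_j}$ are homogeneous polynomials of degree $e(e-1)$ on $P$. After normalization they converge to a nonzero polynomial $\Delta$, and one picks $p$ with $\Delta(p)\ne0$.

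Without such a uniform $\delta$, the branch locus could approach $p$ at rate $\varepsilon_j$. The inner distance between sheets could then be of the same order as their outer distance, and no contradiction with $dist(C_j)\to1$ follows. Your phrases ``joined through the singular locus'' and ``a ratio bounded below'' do not supply this. Also, sheets are joined by going around the branch locus, not necessarily through singular points.

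A smaller point concerns ruling out that $C_j$ is a plane. Your alternative justification via $\deg X=\deg C(X,\infty)$ is wrong for the reduced tangent cone: $y=x^2$ has a line as tangent cone at infinity. You must use the LNE rigidity theorem of \cite{fernandes-sampaio-jga}, as the paper does.
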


The proof of Theorem~\ref{thm:universal_maximal_distortion} reduces to the following statement about cones. In what follows, for $x\in\C^n$ and a non-empty set $A\subset\C^n$, we write $\mathrm{d}(x,A)=\inf_{a\in A}\|x-a\|$ for the Euclidean distance from $x$ to $A$; this should not be confused with the distortion $dist(\cdot)$.

\begin{proposition}\label{prop:cones}
Let $k\le n$ and $d$ be positive integers. There exists $c>1$ such that every pure $k$-dimensional complex algebraic cone $C\subset\C^n$ with $\deg C\le d$ and $dist(C)<c$ is a complex linear subspace.
\end{proposition}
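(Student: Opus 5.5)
We argue by contradiction together with a compactness argument. Suppose no such $c>1$ exists. Then there is a sequence $C_j\subset\C^n$ of pure $k$-dimensional complex algebraic cones with $\deg C_j\le d$, none of them a linear subspace, such that $dist(C_j)\to 1$. The first step is to pass to a limit. Cones of bounded degree and fixed pure dimension correspond to projective varieties $\mathbb{P}(C_j)\subset\mathbb{P}^{n-1}$ of dimension $k-1$ and degree at most $d$. These form a bounded family, namely finitely many components of Chow varieties, each of which is compact. After passing to a subsequence we therefore obtain a limit cycle, whose support is a pure $(k-1)$-dimensional projective variety. Let $C$ be the affine cone over it. Equivalently, $C_j\cap \overline{B}(0,1)\to C\cap\overline{B}(0,1)$ in the Hausdorff sense, and $C$ is a pure $k$-dimensional cone of degree $\le d$.

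The second step shows $dist(C)=1$. For this we rerun the argument of Theorem~\ref{thm:c-lne-tangent-cone-infinity}, with Hausdorff convergence on compact sets replacing rescaling. Take $v,w\in C$ and points $v_j,w_j\in C_j$ converging to them. Choose near-optimal arcs in $C_j$; by the ball-confinement claim in that proof they stay in a fixed ball. Their lengths are at most $dist(C_j)\|v_j-w_j\|+\varepsilon$. Arzel\`a--Ascoli then yields an arc in $C$ of length at most $\|v-w\|+\varepsilon$. Hence $\din=\dout$ on $C$. Proposition~\ref{prop:1-lne-rigidity} now gives that $C$ is a $k$-dimensional linear subspace $L$. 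The limit cycle is then $mL$ for some multiplicity $m\le d$.

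The third step, which is the main obstacle, is to contradict the assumption that $C_j$ is not linear while $C_j\to L$. Here we need a quantitative lower bound on distortion for non-linear cones that are Hausdorff-close to a plane. We would split into two cases.

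\textbf{Case (a): $C_j$ has at least two irreducible components, or is not locally a graph over $L$.} Distinct $k$-dimensional linear components meet only along lower-dimensional subspaces. More generally, two components $A,B$ of $C_j$ are connected inside the link $C_j\cap S^{2n-1}$ only through $A\cap B$, which has dimension $\le k-1$. The two-line computation in Example~\ref{ex:two-lines-sharp} suggests the following. Take $x\in A$ and $y\in B$ that are close in outer distance but far from $A\cap B$ relative to $\|x-y\|$. Any path between them must pass through $A\cap B$, which costs a factor bounded below. Making this uniform as $C_j\to L$ is exactly the delicate point.

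\textbf{Case (b): $C_j$ is irreducible, $m\ge2$ or $C_j$ is a non-linear graph.} If $m\ge 2$, the projection $C_j\to L$ is a branched cover of degree $m$. Near a generic point of $L$, $C_j$ consists of $m$ sheets that are very close in outer distance. Going from one sheet to another in the inner metric requires travelling to the branch locus, whose distance from such a point is not small. This forces $dist(C_j)\to\infty$, a contradiction. If $m=1$ but $C_j$ is not linear, then $C_j$ cannot be a smooth graph over all of $L$, since a homogeneous graph that is smooth at $0$ is linear. Hence the singularity or branching at the origin gives the same kind of obstruction.

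The cleanest route to handle both cases is to use lower semicontinuity of the multiplicity together with a local version of Proposition~\ref{prop:1-lne-rigidity}. I expect the bulk of the work to lie in obtaining this uniform lower bound.
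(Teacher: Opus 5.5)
Your first two steps are sound and run parallel to the paper's argument. Where the paper uses Blaschke selection and then shows by hand that the limit is convex (hence a complex linear subspace) and has dimension exactly $k$, you get algebraicity and pure dimension of the limit directly from compactness of the Chow variety. That is a legitimate, if heavier, shortcut.

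The genuine gap is your third step. This is where the proposition is actually proved, and you leave it open. Your case split is also misdirected:
\begin{itemize}
\item Pass to a subsequence with $\deg C_j=e$ constant. The Chow limit then has degree $e$, so the limit cycle is $eL$, and $e\ge 2$ because a reduced pure $k$-dimensional cone of degree $1$ is linear. Hence the case ``$m=1$ but $C_j$ not linear'' never occurs. The heuristic you give for it (a singularity at the origin creates the obstruction) is therefore unnecessary, and it is unjustified anyway.
\item Irreducibility plays no role. Once $\mathrm{d}(x,L)\le\varepsilon_j\|x\|$ on $C_j$ with $\varepsilon_j\to 0$, you have $C_j\cap L^{\perp}=\{0\}$. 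So the orthogonal projection $\pi\colon C_j\to L$ is proper and finite.
\item Since $\mathbb{P}(L^{\perp})\cap\mathbb{P}(C_j)=\emptyset$, B\'ezout shows that the generic fibre of $\pi$ has exactly $e\ge 2$ points, whether or not $C_j$ is reducible.
\end{itemize}
Everything therefore reduces to your case (b) with $m=e\ge 2$.

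In that case, your claim that the branch locus is ``not small'' distance away is exactly what must be proved, and it is not automatic. A priori the branch loci of the $C_j$ could spread out and become dense in $L$ as $j\to\infty$. The missing idea is to use homogeneity together with the degree bound.
\begin{itemize}
\item Fix a linear form separating a generic fibre. The elementary symmetric functions of its values on the fibres of $\pi$ are holomorphic on $L$ and homogeneous, because the fibre over $ta$ is $t$ times the fibre over $a$. They are therefore polynomials, so the discriminant $\Delta_j$ is a nonzero homogeneous polynomial of degree $e(e-1)$.
\item Normalize the coefficient vectors and pass to a subsequence, so that $\Delta_j\to\Delta\neq 0$. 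This gives a fixed ball $U=B(p,\delta)\subset L$ with $\|p\|=1$ on which $|\Delta_j|\ge\mu>0$ for all large $j$.
\item Over $U$ the projection is then a trivial $e$-sheeted covering. Two points of $C_j$ over $p$ on different sheets are at outer distance at most $2\varepsilon_j/\sqrt{1-\varepsilon_j^2}$.
\item Any path in $C_j$ joining them must leave $\pi^{-1}(U)$; it need not hit the branch locus, as you suggest. Its length is therefore at least $2\delta$.
\end{itemize}
Hence $dist(C_j)\ge\delta\sqrt{1-\varepsilon_j^2}/\varepsilon_j\to\infty$, which is the desired contradiction. Without this uniform branch-free ball, your sketch gives no lower bound on the distortion, and the proof is incomplete.
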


\begin{proof}[Proof of Theorem~\ref{thm:universal_maximal_distortion}, assuming Proposition~\ref{prop:cones}]
Let $c>1$ be given by Proposition~\ref{prop:cones} and let $X\subset\C^n$ be a connected pure $k$-dimensional complex algebraic set with $\deg X\le d$ and $dist(X)<c$. In particular, $X$ is LNE.
Let $\overline{X}\subset\mathbb{P}^n$ be the projective closure of $X$ and $H_\infty$ the hyperplane at infinity. The tangent cone $C(X,\infty)$ is the affine cone over $\overline{X}\cap H_\infty$; since every irreducible component of $\overline{X}$ meets $H_\infty$ in a set of pure dimension $k-1$, $C(X,\infty)$ is a pure $k$-dimensional complex algebraic cone (see also Proposition~\ref{prop:tangent-cone-infinity-algebraic}), and
$\deg C(X,\infty)=\deg(\overline{X}\cap H_\infty)\le\deg\overline{X}=\deg X\le d$.
By Theorem~\ref{thm:c-lne-tangent-cone-infinity}, $dist(C(X,\infty))\le dist(X)<c$. By Proposition~\ref{prop:cones}, $C(X,\infty)$ is a linear subspace, and then \cite[Theorem~1.2]{fernandes-sampaio-jga} implies that $X$ is an affine linear subspace.

Finally, let $c(k,n,d)$ be the supremum of all $c>1$ with the property stated in Theorem~\ref{thm:universal_maximal_distortion}; by the above, this set is nonempty, so $c(k,n,d)>1$. If $dist(X)<c(k,n,d)$, then $dist(X)<c$ for some admissible $c$, so $X$ is an affine linear subspace. Hence $c(k,n,d)$ itself has the property, and it is maximal.
\end{proof}

\begin{proof}[Proof of Proposition~\ref{prop:cones}]
Suppose the statement is false. Then there is a sequence $\{X_j\}$ of pure $k$-dimensional complex algebraic cones in $\C^n$, none of which is a linear subspace, with $\deg X_j\le d$ and $dist(X_j)\to 1$.
A reduced pure $k$-dimensional cone of degree $1$ is a linear subspace, hence $\deg X_j\ge 2$; passing to a subsequence, we may assume $\deg X_j=e$ for all $j$, with $2\le e\le d$. Note that each $X_j$ is connected, since it is a cone.

\medskip
\noindent\textbf{Step~1: Compactness.}
Let $\overline{B}$ be the closed unit ball of $\C^n$, $\overline{B}_R=R\overline{B}$, and $K_j=X_j\cap\overline{B}$. By the Blaschke selection theorem, after passing to a subsequence, $K_j\to K$ in the Hausdorff distance, for some compact set $K\subset\overline{B}$.
Each $K_j$ contains $0$ and satisfies $\lambda K_j\subset K_j$ for all $\lambda\in\C$ with $|\lambda|\le1$; these properties pass to the limit. Set
\[
  Y=\bigcup_{t>0} tK .
\]
Then $Y\cap\overline{B}_R=RK$ for every $R>0$, so $Y$ is closed, $\lambda Y\subset Y$ for all $\lambda\in\C$, and
\[
  X_j\cap\overline{B}_R=RK_j\longrightarrow RK=Y\cap\overline{B}_R
  \quad\text{in the Hausdorff distance, for every } R>0.
\]
Let $\varepsilon_j=\sup_{x\in K_j}\mathrm{d}(x,Y)$. Then $\varepsilon_j\to0$ and, since $X_j$ and $Y$ are cones,
\begin{equation}\label{eq:close-to-Y}
  \mathrm{d}(x,Y)\le\varepsilon_j\,\|x\| \quad\text{for all } x\in X_j .
\end{equation}

\medskip
\noindent\textbf{Step~2: The limit has distortion one.}
We claim that for all $v,w\in Y$ there is a rectifiable path in $Y$ from $v$ to $w$ of length at most $\|v-w\|$.

Let $v,w\in Y$ and fix $R>\max\{\|v\|,\|w\|\}+2\|v-w\|+2$. By Step~1, there are $x_j,y_j\in X_j$ with $x_j\to v$ and $y_j\to w$. Let $c_j=dist(X_j)$ and choose rectifiable paths $\gamma_j$ in $X_j$ from $x_j$ to $y_j$ with
\[
  \ell_j:=\operatorname{Length}(\gamma_j)\le \Big(c_j+\tfrac1j\Big)\|x_j-y_j\| .
\]
Then $\limsup_j\ell_j\le\|v-w\|$; in particular, for $j$ large, $\ell_j\le\ell:=\|v-w\|+1$ and $\gamma_j\subset\overline{B}(x_j,\ell_j)\subset\overline{B}_R$.
Let $\beta_j\colon[0,\ell]\to\overline{B}_R$ be the arc-length parametrization of $\gamma_j$ on $[0,\ell_j]$, extended as the constant $y_j$ on $[\ell_j,\ell]$. The maps $\beta_j$ are $1$-Lipschitz and uniformly bounded, so by the Arzel\`{a}--Ascoli theorem a subsequence converges uniformly to a $1$-Lipschitz path $\beta\colon[0,\ell]\to\C^n$ with $\beta(0)=v$ and $\beta(\ell)=w$.
Since $\beta_j(t)\in X_j\cap\overline{B}_R$ and $X_j\cap\overline{B}_R\to Y\cap\overline{B}_R$, we get $\beta(t)\in Y$ for all $t$. By lower semicontinuity of length under uniform convergence,
\[
  \operatorname{Length}(\beta)\le\liminf_j \operatorname{Length}(\beta_j)
  =\liminf_j\ell_j\le\|v-w\| .
\]

\medskip
\noindent\textbf{Step~3: The limit is a $k$-dimensional linear subspace.}

\emph{(a) $Y$ is a complex linear subspace.}
A path from $v$ to $w$ of length at most $\|v-w\|$ is a parametrization of the segment $[v,w]$. Hence, by Step~2, $[v,w]\subset Y$ for all $v,w\in Y$, i.e., $Y$ is convex (compare with Step~1 in the proof of Proposition~\ref{prop:1-lne-rigidity}).
For $u,w\in Y$ and $\lambda\in\C$, we then have $u+w=2\cdot\tfrac{u+w}{2}\in Y$ and $\lambda u\in Y$. Thus $Y$ is a complex linear subspace of $\C^n$.

\emph{(b) $\dim_{\C}Y\ge k$.}
Let $V\subset\C^n$ be a linear subspace with $\dim V=n-k+1$. Since $\dim\mathbb{P}(X_j)+\dim\mathbb{P}(V)=(k-1)+(n-k)=n-1$, the projective dimension theorem gives $\mathbb{P}(X_j)\cap\mathbb{P}(V)\ne\emptyset$, so there is $u_j\in X_j\cap V$ with $\|u_j\|=1$. Any limit point of $\{u_j\}$ is a unit vector in $Y\cap V$. Thus $Y$ meets every linear subspace of dimension $n-k+1$ nontrivially, which forces $\dim Y\ge k$.

\emph{(c) $\dim_{\C}Y\le k$.}
Suppose that $\dim Y=m>k$, and let $\rho\colon\C^n\to Y$ be the orthogonal projection. Then $\rho|_{X_j}$ is a proper mapping, and thus $Z_j=\rho(X_j)$ is an algebraic cone in $Y\cong\C^{m}$ with $\dim Z_j\le k$ and $\deg Z_j\le\deg X_j=e$. Since an algebraic set of degree at most $e$ is the common zero set of polynomials of degree at most $e$, and $Z_j\ne Y$, there is a nonzero polynomial of degree at most $e$ vanishing on $Z_j$; as $Z_j$ is a cone, each of its homogeneous components vanishes on $Z_j$. So there is a nonzero homogeneous polynomial $h_j$ on $Y$ with $\deg h_j\le e$ and $h_j|_{Z_j}=0$.
Passing to a subsequence, $\deg h_j=m$ is constant and, after normalizing the coefficient vectors to norm one, $h_j\to h$ with $h\ne0$ homogeneous of degree $m$.
Let $w\in W$ with $\|w\|=1$. Since $w\in Y$, there are $x_j\in X_j$ with $x_j\to w$; then $\rho(x_j)\to\rho(w)=w$ and $h_j(\rho(x_j))=0$, so $h(w)=0$. Thus $h$ vanishes on the unit sphere of $Y$, hence on $Y$, a contradiction.

Therefore, $Y$ is a complex linear subspace of dimension $k$.

\medskip
\noindent\textbf{Step~4: Several sheets close to $Y$ give large distortion.}
After a unitary change of coordinates (which does not change distortions), we may assume $Y=\C^k\times\{0\}$. Write $x=(x',x'')\in\C^k\times\C^{n-k}$ and let $\pi(x)=x'$. By \eqref{eq:close-to-Y}, $\|x''\|\le\varepsilon_j\|x\|$ for $x\in X_j$; we take $j$ large so that $\varepsilon_j<1/2$. Then
\begin{equation}\label{eq:norm-bound}
  \|x\|\le\frac{\|x'\|}{\sqrt{1-\varepsilon_j^2}}
  \quad\text{and}\quad
  \|x''\|\le\frac{\varepsilon_j\|x'\|}{\sqrt{1-\varepsilon_j^2}}
  \qquad\text{for all } x\in X_j .
\end{equation}
In particular, $X_j\cap\ker\pi=\{0\}$ and $\pi|_{X_j}\colon X_j\to\C^k$ is proper with finite fibres. Moreover, for $a\in\C^k$, the fibre $X_j\cap\pi^{-1}(a)$ is $X_j\cap(a+\ker\pi)$; the projective closure of the affine plane $a+\ker\pi$ meets $H_\infty$ in $\mathbb{P}(\ker\pi)$, which is disjoint from $\overline{X_j}\cap H_\infty=\mathbb{P}(X_j)$. By B\'ezout's theorem, for generic $a$ the fibre consists of exactly $e$ points. Hence $\pi|_{X_j}$ is an analytic covering of $\C^k$ with $e\ge2$ sheets (see \cite{chirka}).

\emph{A ball free of branching, uniform in $j$.}
Choose a linear form $\ell_j$ on $\C^n$ separating the $e$ points of some generic fibre of $\pi|_{X_j}$ and, for $a\in\C^k$, let $x_1(a),\dots,x_e(a)$ be the points of $X_j\cap\pi^{-1}(a)$, listed with multiplicity. The elementary symmetric functions $\sigma_{j,i}(a)$ of $\ell_j(x_1(a)),\dots,\ell_j(x_e(a))$ are holomorphic on $\C^k$ (see \cite{chirka}). Since $X_j$ is a cone, the fibre over $ta$ is $t$ times the fibre over $a$ (with the same multiplicities), so $\sigma_{j,i}(ta)=t^i\sigma_{j,i}(a)$ for $t\in\C^*$. Hence $\sigma_{j,i}$ is a homogeneous polynomial of degree $i$, and
\[
  \Delta_j(a)=\prod_{1\le r<s\le e}\big(\ell_j(x_r(a))-\ell_j(x_s(a))\big)^2
\]
is a homogeneous polynomial of degree $e(e-1)$ on $\C^k$, not identically zero by the choice of $\ell_j$. If $\Delta_j(a)\ne0$, then the fibre over $a$ consists of $e$ distinct points.

Normalize $\Delta_j$ so that its coefficient vector has norm one (this does not change its zero set). Passing to a subsequence, $\Delta_j\to\Delta$ uniformly on compact sets, with $\Delta\ne0$ homogeneous. Choose $p\in\C^k$ with $\|p\|=1$ and $\Delta(p)\ne0$, and $\delta\in(0,1/2)$ such that $|\Delta|\ge2\mu>0$ on $\overline{B}(p,\delta)$. Then $|\Delta_j|\ge\mu$ on $\overline{B}(p,\delta)$ for all large $j$.

Fix such a $j$ and let $U=B(p,\delta)\subset\C^k$. Over $U$ every fibre of $\pi|_{X_j}$ has exactly $e$ points, each of multiplicity one, so $\pi\colon X_j\cap\pi^{-1}(U)\to U$ is a proper local homeomorphism with fibres of constant cardinality $e$, i.e., an $e$-sheeted covering map. Since $U$ is simply connected, the covering is trivial:
\[
  X_j\cap\pi^{-1}(U)=\Gamma_1\sqcup\dots\sqcup\Gamma_e ,
  \qquad \Gamma_i=\{(a,\varphi_i(a)) \mathrel{;} a\in U\},
\]
where the maps $\varphi_i\colon U\to\C^{n-k}$ are continuous and the sets $\Gamma_i$ are pairwise disjoint, each open and closed in $X_j\cap\pi^{-1}(U)$.

\emph{The estimate.}
Let $x=(p,\varphi_1(p))$ and $y=(p,\varphi_2(p))$. Then $x\ne y$ and, by \eqref{eq:norm-bound},
\[
  \|x-y\|\le\|\varphi_1(p)\|+\|\varphi_2(p)\|
  \le\frac{2\varepsilon_j}{\sqrt{1-\varepsilon_j^2}} .
\]
Let $\gamma\colon[0,1]\to X_j$ be any rectifiable path from $x$ to $y$. If $\pi(\gamma([0,1]))\subset U$, then $\gamma([0,1])$ would be a connected subset of $\Gamma_1\sqcup\dots\sqcup\Gamma_e$ meeting both $\Gamma_1$ and $\Gamma_2$, which is impossible. Hence there is $t_0$ with $\|\pi(\gamma(t_0))-p\|\ge\delta$. Since $\pi\circ\gamma$ starts and ends at $p$ and $\pi$ is $1$-Lipschitz,
\[
  \operatorname{Length}(\gamma)\ge\operatorname{Length}(\pi\circ\gamma)
  \ge 2\,\|\pi(\gamma(t_0))-p\|\ge2\delta .
\]
Therefore $\din(x,y)\ge2\delta$ and
\[
  dist(X_j)\ge\frac{\din(x,y)}{\|x-y\|}
  \ge\frac{\delta\sqrt{1-\varepsilon_j^2}}{\varepsilon_j}
  \xrightarrow[j\to\infty]{}+\infty ,
\]
which contradicts $dist(X_j)\to1$. This proves Proposition~\ref{prop:cones}.
\end{proof}

\begin{remark}
Steps~1--3 show that a cone of bounded degree with distortion close to $1$ is Hausdorff-close (on the unit ball) to a $k$-dimensional linear subspace, and Step~4 shows that a non-linear cone of bounded degree which is Hausdorff-close to a $k$-dimensional linear subspace has large distortion. The degree bound is used in the reduction to cones, in Step~3(c), and in the uniform choice of $\delta$ in Step~4. The argument is not effective; explicit values of $c(k,n,d)$ would require a quantitative version of Step~4.
\end{remark}

\begin{remark}
We assumed $X$ to be pure-dimensional. For non-pure sets, lower-dimensional components could connect different sheets in Step~4. This can be handled by also removing from $U$ the projections of the lower-dimensional components (which lie in proper hypersurfaces of bounded degree, so the same normalization argument applies), and by treating separately the case in which the $k$-dimensional part is a single $k$-dimensional linear subspace.
\end{remark}

\subsection{Rigidity of sets with tangent cones at infinity being union of linear subspaces}

Proposition~\ref{prop:1-lne-rigidity} reveals a striking rigidity phenomenon 
in complex algebraic geometry: the condition $\din = \dout$ alone forces a 
complex algebraic set to be flat. This raises a natural question: {\it how much can the constant $c$ be relaxed while still retaining geometric rigidity?}

In full generality, the answer depends on the dimension, but for curves, we obtain a sharp threshold.

\begin{theorem}\label{thm:tang_cone_union_linear_spaces}
Let $X \subset \C^n$ be a connected $d$-dimensional complex algebraic set such that its tangent cone at infinity, $C(X,\infty)$, is a union of complex linear subspaces. If $dist(X) < \sqrt{2}$, then $X$ must be a complex affine linear subspace.
\end{theorem}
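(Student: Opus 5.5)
We reduce to the tangent cone at infinity and then to a computation on pairs of linear subspaces, following the strategy outlined in the introduction.

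\textbf{Step 1: pass to the cone.} By Theorem~\ref{thm:c-lne-tangent-cone-infinity}, applied to the semialgebraic set $X$, we have $dist(C(X,\infty))\le dist(X)<\sqrt{2}$. By hypothesis $C(X,\infty)=L_1\cup\dots\cup L_r$, a union of complex linear subspaces, which we take to be irredundant (no $L_i$ contained in another). By Proposition~\ref{prop:tangent-cone-infinity-algebraic} and the purity argument used in the proof of Theorem~\ref{thm:universal_maximal_distortion}, each $L_i$ has dimension $d$. It then suffices to show $r=1$. Once $C(X,\infty)$ is a single linear subspace, \cite[Theorem~1.2]{fernandes-sampaio-jga} gives that $X$ is an affine linear subspace.

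\textbf{Step 2: two distinct subspaces force distortion $\ge\sqrt2$.} Suppose $r\ge2$ and take $L_1\neq L_2$ of dimension $d$. Then $L_1\cap L_2=:W$ is a proper subspace of each. Choose a unit vector $u\in L_1\cap W^{\perp}$ and let $v$ be a unit vector of $L_2$ as close as possible to $u$; we want $\langle u,v\rangle$ controlled. A cleaner choice is the following:
\begin{itemize}
\item pick unit vectors $u\in L_1$, $v\in L_2$, both orthogonal to $W$;
\item consider the points $x=tu$ and $y=tv'$, where $v'$ is chosen so that the outer distance is small relative to the inner one.
\end{itemize}
We must compute the inner distance from $x\in L_1\setminus L_2$ to $y\in L_2\setminus L_1$ inside the whole cone $C=\bigcup L_i$, and paths may pass through other $L_i$. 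The key lower bound is
\[
\din(x,y)\ge \min_{z\in L_1\cap(\text{other components})}\bigl(\|x-z\|+\dots\bigr).
\]
To avoid handling chains through many components, we use a rescaling/blow-down trick. Take $x\in L_1$ generic (not in any other $L_i$). For small $s$, look at points $x+s\,\cdot$ near $x$. Near $x$ the cone is locally just $L_1$, so we cannot produce a bad pair there. Instead, we work at the ``edge'' between components. Pick $i\ne j$ with $L_i\cap L_j=W$ maximal and a point $w\in W$ generic, meaning $w$ lies in no other intersection. The tangent cone of $C$ at $w$ is then a union of subspaces through $W$, and by the local version \cite[Theorem~5.2]{fernandes-sampaio-book} its distortion is at most $dist(C)<\sqrt2$. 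Splitting off the common factor $W$ (distortion is unchanged by taking a product with $\C^{\dim W}$ orthogonal to everything, after projecting orthogonally to $W^\perp$) reduces us to a cone $L'_1\cup\dots\cup L'_s\subset W^{\perp}$ whose components pairwise meet only at $0$, with $s\ge2$.

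\textbf{Step 3: the transverse case.} Now all paths between distinct components pass through $0$. So for unit vectors $u\in L'_1$, $v\in L'_2$ with $|\langle u,v\rangle|=\cos\theta$, the computation of Example~\ref{ex:two-lines-sharp} applies verbatim (phases chosen so that $\mathrm{Re}\langle \lambda u,\mu v\rangle=|\lambda||\mu|\cos\theta$). This gives distortion at least $\sqrt{2/(1-\cos\theta)}\ge\sqrt2$, which is a contradiction.

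\textbf{Main obstacle.} The hardest part is Step~2's reduction: justifying that the local tangent cone at a generic point $w\in W$ is exactly $\bigcup_{L_i\supset W}L_i$ and inherits the distortion bound, and that the product splitting with $W$ preserves distortion. The distortion of $A\times\C^m$ equals that of $A$ for cones $A$; this needs the inequality $\sqrt{a^2+b^2}\le c\sqrt{a'^2+b^2}$ whenever $a\le ca'$ and $c\ge1$, which does hold.
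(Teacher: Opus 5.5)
Your proposal is correct and is essentially the paper's proof of Lemma~\ref{lem:union-subspaces}. Both arguments take a maximal pairwise intersection $W$ and a generic point $w\in W$ avoiding the components that do not contain $W$. Both then project orthogonally onto $W^{\perp}$, where the remaining components meet only at $0$, and finish with the two-lines estimate. The one difference is that the paper does not invoke the local tangent-cone theorem and a product splitting; instead it estimates directly at the points $p+su$, $p+sv$, with two cases: either the path leaves $B(p,\rho)$, or it projects into $\bigcup_{i\in I}L_i'$.

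Two small remarks. First, the splitting direction you actually need is $dist(A)\le dist(W\times A)$, and it follows at once from the $1$-Lipschitz projection onto $W^{\perp}$; the inequality you quote proves the opposite direction, which you do not need. Second, purity of $C(X,\infty)$ is never used, since irredundancy alone makes $W$ proper in both $L_i$ and $L_j$.
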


Theorem~\ref{thm:tang_cone_union_linear_spaces} can be viewed as a quantitative strengthening of 
Proposition~\ref{prop:1-lne-rigidity} for this class of sets: the conclusion 
persists not only at $dist(X) = 1$ but for the entire range $dist(X) < \sqrt{2}$. As 
we shall see in Section~\ref{sec:remarks}, the bound $dist(X) < \sqrt{2}$ is sharp.

The key ingredient is the following lemma, which extends the computation in Example~\ref{ex:two-lines-sharp} to arbitrary finite unions of linear subspaces.

\begin{lemma}\label{lem:union-subspaces}
Let $L_1,\dots,L_r\subset\C^n$, $r\geq 2$, be complex linear subspaces such that $L_i\not\subset L_j$ for all $i\neq j$, and let $Z=L_1\cup\dots\cup L_r$. Then $dist(Z)\geq\sqrt{2}$.
\end{lemma}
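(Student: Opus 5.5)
The idea is to find a point $p\in Z$ near which $Z$ looks like a union of subspaces through $p$ that pairwise meet exactly along a common subspace $M$. Then I run the computation of Example~\ref{ex:two-lines-sharp} in the directions orthogonal to $M$, at a small scale around $p$.

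\textbf{Choice of $M$ and $p$.} Among all intersections $L_i\cap L_j$ with $i\neq j$, choose one that is maximal with respect to inclusion. Relabel so that it is $M=L_1\cap L_2$. Since $L_1\not\subset L_2$ and $L_2\not\subset L_1$, $M$ is a proper subspace of both. Let $I=\{i \mathrel{;} M\subset L_i\}$, which contains $1$ and $2$.

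\emph{Pairwise intersections.} For $i,j\in I$ with $i\ne j$ we have $L_i\cap L_j\supset M$, so maximality gives $L_i\cap L_j=M$.

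\emph{Choice of $p$.} For $i\notin I$, $L_i\cap M$ is a proper subspace of $M$. A finite union of proper subspaces cannot cover $M$, so we may pick
\[
p\in M\setminus\bigcup_{i\notin I}L_i .
\]
Then there is $\rho>0$ such that $B(p,\rho)\cap Z=B(p,\rho)\cap\bigcup_{i\in I}L_i$.

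\textbf{The test points.} Write $E_i=L_i\cap M^{\perp}$, so that $L_i=M\oplus E_i$ orthogonally. For $i\neq j$ in $I$ we get $E_i\cap E_j=\{0\}$, because $L_i\cap L_j=M$. Both $E_1$ and $E_2$ are nonzero. Pick unit vectors $a\in E_1$ and $b\in E_2$. Multiplying $b$ by a unimodular complex scalar (which keeps it in $E_2$), arrange $\operatorname{Re}\langle a,b\rangle=|\langle a,b\rangle|\ge 0$. For small $t>0$ set $x=p+ta$ and $y=p+tb$. Then
\[
\|x-y\|^2=t^2\bigl(2-2\operatorname{Re}\langle a,b\rangle\bigr)\le 2t^2 .
\]

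\textbf{Lower bound on $\din(x,y)$ (the main step).} Using the claim inside the proof of Theorem~\ref{thm:c-lne-tangent-cone-infinity} (any near-optimal path has length at most $(dist(Z)+\varepsilon)\|x-y\|=O(t)$), it suffices to bound the length of paths $\gamma$ from $x$ to $y$ that stay inside $B(p,\rho)$ once $t$ is small. Assume $dist(Z)<\infty$; otherwise there is nothing to prove.

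Such a $\gamma$ lies in $\bigcup_{i\in I}L_i$. It starts at $x\in L_1$, and $x\notin L_j$ for $j\in I\setminus\{1\}$: otherwise $ta\in L_1\cap L_j=M$, but $a\perp M$ and $a\neq 0$. Similarly $y\notin L_1$. Consider the last time $\gamma$ is in the closed set $L_1$. Just after it, $\gamma$ is in $\bigcup_{j\in I\setminus\{1\}}L_j$, which is closed. Hence at that moment $\gamma$ passes through a point $z\in L_1\cap L_j=M$ for some $j\ne1$.

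Write $z=p+m$ with $m\in M$. Since $a,b\perp M$,
\[
\operatorname{Length}(\gamma)\ge\|ta-m\|+\|m-tb\|=2\sqrt{t^2+\|m\|^2}\ge 2t .
\]
Therefore $\din(x,y)\ge 2t\ge\sqrt2\,\|x-y\|$, and so $dist(Z)\ge\sqrt2$.

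\textbf{Expected obstacle.} The delicate point is the localization. I must ensure that near-optimal paths cannot escape $B(p,\rho)$ and use other subspaces $L_i$ with $i\notin I$ to avoid $M$. This is exactly what the ball-confinement argument, rescaled with $t\to0$, handles. I must also justify rigorously the "last exit from $L_1$ lands in $M$" step; it uses only closedness of finite unions of subspaces.
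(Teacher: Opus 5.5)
Your proof is correct and follows essentially the same route as the paper. It uses the same choice of a maximal pairwise intersection $M$, index set $I$, generic point $p\in M$ and radius $\rho$, and the same phase-normalized test points $p+ta$, $p+tb$, which give $\din(x,y)\ge 2t\ge\sqrt2\,\|x-y\|$.

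The only differences are in execution. The paper handles paths leaving $B(p,\rho)$ directly, since such a path has length at least $2(\rho-s)\ge 2s$ once $s\le\rho/2$; this avoids your assumption $dist(Z)<\infty$ and the confinement claim. For paths inside the ball, the paper gets the bound $2s$ by projecting onto $M^{\perp}$ and noting that the projected path must pass through $0$. You instead find a point of $\gamma$ in $M$ by a last-exit argument and then apply Pythagoras.
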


\begin{proof}
Among the subspaces $L_i\cap L_j$ with $i\neq j$, choose one, say $M=L_a\cap L_b$, of maximal dimension, and let $I=\{i \mathrel{;} M\subset L_i\}$; note that $a,b\in I$. If $i,j\in I$ are distinct, then $L_i\cap L_j\supset M$, so $L_i\cap L_j=M$ by the maximality of $\dim M$.

Let $Q\colon\C^n\to M^{\perp}$ be the orthogonal projection and, for $i\in I$, set $L_i'=L_i\cap M^{\perp}$. Since $M\subset L_i$, we have $L_i=M\oplus L_i'$ and $Q(L_i)=L_i'$. For distinct $i,j\in I$, we get $L_i'\cap L_j'=M\cap M^{\perp}=\{0\}$. Moreover, $L_a'\neq\{0\}$, since otherwise $L_a=M\subset L_b$; similarly, $L_b'\neq\{0\}$.

For $i\notin I$, the subspace $L_i\cap M$ is a proper subspace of $M$. Since a complex vector space is not a finite union of proper subspaces, there is $p\in M$ with $p\notin L_i$ for all $i\notin I$ (if $M=\{0\}$, then $I=\{1,\dots,r\}$ and $p=0$). Let $\rho>0$ be smaller than $\mathrm{d}(p,L_i)$ for all $i\notin I$ (any $\rho>0$ if there is no such $i$). Since $p\in L_i$ for $i\in I$, we have
\[
Z\cap B(p,\rho)=Z_I\cap B(p,\rho),\qquad\text{where } Z_I=\bigcup_{i\in I}L_i .
\]

Choose unit vectors $u\in L_a'$ and $v\in L_b'$. Replacing $v$ by $e^{i\varphi}v$ for a suitable $\varphi\in\R$, we may assume that $\langle u,v\rangle=|\langle u,v\rangle|\geq0$, so that $\|u-v\|^2=2-2|\langle u,v\rangle|\leq2$. For $0<s\leq\rho/2$, set $x=p+su$ and $y=p+sv$. Then $x\in L_a\subset Z$, $y\in L_b\subset Z$ and $\|x-y\|=s\|u-v\|\leq\sqrt{2}\,s$.

Let $\gamma$ be a rectifiable path in $Z$ from $x$ to $y$. If $\gamma$ leaves $B(p,\rho)$, then, since $\|x-p\|=\|y-p\|=s$, we get $\operatorname{Length}(\gamma)\geq2(\rho-s)\geq\rho\geq2s$. Otherwise, $\gamma$ is contained in $Z_I$, and $\beta=Q\circ\gamma$ is a rectifiable path in $Q(Z_I)=\bigcup_{i\in I}L_i'$ from $Q(x)=su$ to $Q(y)=sv$ (recall that $p\in M$ and $u,v\in M^{\perp}$). The set $\bigcup_{i\in I}L_i'\setminus\{0\}$ is the disjoint union of the finitely many closed subsets $L_i'\setminus\{0\}$, $i\in I$, each of which is therefore also open in it. Since $su\in L_a'\setminus\{0\}$ and $sv\in L_b'\setminus\{0\}$ with $a\neq b$, the path $\beta$ passes through the origin. As $Q$ is $1$-Lipschitz,
\[
\operatorname{Length}(\gamma)\geq\operatorname{Length}(\beta)\geq\|su\|+\|sv\|=2s .
\]
In both cases $\operatorname{Length}(\gamma)\geq2s$, hence $\din(x,y)\geq2s$ and
\[
dist(Z)\geq\frac{\din(x,y)}{\|x-y\|}\geq\frac{2s}{\sqrt{2}\,s}=\sqrt{2}. \qedhere
\]
\end{proof}

We are now ready to prove Theorem~\ref{thm:tang_cone_union_linear_spaces}. The strategy is to 
show that the tangent cone at infinity $C(X, \infty)$ is a linear subspace, and then invoke the Lipschitz rigidity result of \cite{fernandes-sampaio-jga}.

\begin{proof}[Proof of Theorem~\ref{thm:tang_cone_union_linear_spaces}]
	Let $X \subset \C^n$ be as in the statement, with $c:=dist(X) < \sqrt{2}$. In particular, $X$ is $c$-Lipschitz normally embedded.
	
	By Theorem~\ref{thm:c-lne-tangent-cone-infinity}, $C(X, \infty)$ is $c$-Lipschitz normally embedded, that is, $dist(C(X,\infty))\leq c<\sqrt{2}$. By hypothesis, $C(X,\infty)$ is a finite union of complex linear subspaces; discarding those contained in others, we may write $C(X,\infty)=L_1\cup\dots\cup L_r$ with $L_i\not\subset L_j$ for $i\neq j$. If $r\geq2$, Lemma~\ref{lem:union-subspaces} gives $dist(C(X,\infty))\geq\sqrt{2}$, a contradiction. Hence $r=1$, that is, $C(X,\infty)$ is a linear subspace of $\C^n$.
	
	Since $X$ is a complex algebraic set 
	which is LNE (at infinity) and $C(X, \infty)$ is 
	a linear subspace, it follows from 
	\cite[Theorem~1.2]{fernandes-sampaio-jga} that $X$ is an affine linear 
	subspace of $\C^n$. 
\end{proof}

\begin{corollary}\label{cor:algebraic_curves}
   Let $X \subset \C^n$ be a connected algebraic affine complex curve. If $dist(X) < \sqrt{2}$, then $X$ is a 
complex line.
\end{corollary}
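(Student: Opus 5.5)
The corollary follows by specializing Theorem~\ref{thm:tang_cone_union_linear_spaces} to dimension one. The only thing to check is its hypothesis: the tangent cone at infinity $C(X,\infty)$ must be a union of complex linear subspaces. We also have to make sure a degenerate case (a finite set) does not occur.

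First, dimension. By convention a curve has $\dim_{\C}X=1$. A connected algebraic set of dimension zero is a single point, so it is not a curve. If one wants to allow $X$ to be non-pure, with some isolated points, connectedness excludes them unless $X$ is a point. So $X$ is pure of dimension one.

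Next, the tangent cone. By Proposition~\ref{prop:tangent-cone-infinity-algebraic}, $C(X,\infty)$ is a complex algebraic set of complex dimension $1$ and is a complex cone. In particular, whenever $x\in C(X,\infty)$ with $x\neq 0$, the whole complex line $\C x$ lies in $C(X,\infty)$. Hence $C(X,\infty)$ is a union of complex lines through the origin. Its projectivization $\mathbb{P}(C(X,\infty))\subset\mathbb{P}^{n-1}$ is an algebraic set of dimension $0$, hence finite. Therefore $C(X,\infty)=L_1\cup\dots\cup L_r$ is a finite union of complex lines, which is exactly the hypothesis of Theorem~\ref{thm:tang_cone_union_linear_spaces}.

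Applying that theorem with $dist(X)<\sqrt{2}$ shows that $X$ is a complex affine linear subspace. Since $\dim_{\C}X=1$, it is a complex affine line.

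The only delicate point is the identification of a one-dimensional complex algebraic cone with a finite union of lines. This is standard: its projectivization is zero-dimensional and closed, hence finite. The rest is a direct invocation of the theorem, so I expect no real obstacle.
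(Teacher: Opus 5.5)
Your proposal is correct and follows the paper's route. The paper also derives the corollary directly from Theorem~\ref{thm:tang_cone_union_linear_spaces}, using only that the tangent cone at infinity of a complex algebraic curve is a finite union of complex lines through the origin; your checks on purity of dimension and on finiteness via the zero-dimensional projectivization spell out what the paper leaves implicit.
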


\section{Sharpness and further remarks}\label{sec:remarks}

\subsection{Sharpness for the constant}
We show that the constant $\sqrt{2}$ in Theorem~\ref{thm:tang_cone_union_linear_spaces} is 
optimal by exhibiting two algebraic curves (one singular and one smooth) that are $\sqrt{2}$-LNE but are not complex lines.

\medskip

\begin{example}
The curve $X = \{(x, y) \in \C^2 \mathrel{;} xy = 0\}$ is the union of the 
two coordinate axes, which meet orthogonally at the origin. By 
Example~\ref{ex:two-lines-sharp} with $\theta = \pi/2$, the sharp $c$-LNE 
constant of $X$ is $c = \sqrt{2}$. Thus $X$ is a connected algebraic curve 
that is $\sqrt{2}$-LNE but is not a complex line.
\end{example}

\subsection{Higher-dimensional generalizations}

It is natural to ask whether an analogous rigidity threshold exists for 
higher-dimensional complex algebraic sets. For a $d$-dimensional complex 
algebraic set $X \subset \C^n$, the tangent cone at infinity $C(X, \infty)$ 
is a $d$-dimensional complex algebraic cone. The critical $c$-LNE constant 
below which $X$ must be an affine linear subspace may depend on $d$ and on the geometry of such cones. Determining the sharp threshold in dimension 
$d \geq 2$ remains an open problem.

\begin{proposition}
Let $X=\{(x,y,z)\in \C^3;x^2+y^2+z^2=0\}$. Then $dist(X)= \sqrt{2} \sin\left(\frac{\pi}{2\sqrt{2}}\right)<\sqrt{2}$. Moreover, the supremum is a maximum, attained e.g. at $p_0=\tfrac1{\sqrt2}(1,i,0)$ and $q_0=\overline{p_0}$.

\end{proposition}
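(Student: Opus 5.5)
The plan is to exploit the explicit quotient description of the quadric cone. Consider
\[
\varphi\colon\C^2\to\C^3,\qquad \varphi(u,v)=\tfrac{1}{\sqrt2}\bigl(u^2-v^2,\; i(u^2+v^2),\; 2uv\bigr).
\]
First, I will check that $\varphi$ is onto $X$, that $\varphi(w)=\varphi(w')$ iff $w'=\pm w$, and that $\|\varphi(w)\|=\|w\|^2$. Polarizing this last identity should give
\[
\langle\varphi(w_1),\varphi(w_2)\rangle=\langle w_1,w_2\rangle^2 .
\]
It should also give the pulled-back metric on the unit sphere: for $\|w\|=1$ and $h\perp w$, a horizontal vector $h$ has speed $\sqrt2$, while the Hopf direction $h=iw$ has speed $2$. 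Thus the link $L=X\cap S^5$ is $S^3/\{\pm1\}$ with a Berger-type metric, horizontal distances scaled by $\sqrt2$ and fibre distances by $2$, and $X$ is the Euclidean cone over $L$. Note that $p_0=\varphi(1,0)$ and $q_0=\varphi(0,i)$.

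\textbf{Reduction to the link.} For $p=r\xi$ and $q=s\eta$ with $\xi,\eta\in L$, let $D=d_L(\xi,\eta)$ be the intrinsic link distance and $\theta$ the Euclidean angle between $\xi$ and $\eta$. The cone formula gives
\[
\din(p,q)^2=r^2+s^2-2rs\cos\min\{D,\pi\},\qquad \|p-q\|^2=r^2+s^2-2rs\cos\theta .
\]
Set $t=2rs/(r^2+s^2)\in(0,1]$. The squared ratio becomes $(1-t\cos D)/(1-t\cos\theta)$, which is nondecreasing in $t$ because $D\ge\theta$. Hence the supremum is attained at $r=s=1$, and
\[
dist(X)=\sup_{\xi,\eta\in L}\frac{\sin(\min\{D,\pi\}/2)}{\sin(\theta/2)} .
\]

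\textbf{Lower bound.} Take $\xi=p_0=\varphi(1,0)$ and $\eta=q_0=\varphi(0,i)$. These lift to orthogonal unit vectors of $\C^2$, so $\langle p_0,q_0\rangle=0$ and $\|p_0-q_0\|=\sqrt2$, i.e.\ $\theta=\pi/2$. Their lifts are joined by a horizontal great circle of length $\pi/2$, which gives $D\le\pi/\sqrt2$. I will prove equality $D=\pi/\sqrt2$ by projecting to the base $\mathbb{CP}^1$ (a sphere of radius $1/2$ in the Fubini--Study metric): the projection is distance-nonincreasing once horizontal lengths are divided by $\sqrt2$, and the images of $p_0,q_0$ are antipodal there. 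This yields the value
\[
\frac{\sin(\pi/(2\sqrt2))}{\sin(\pi/4)}=\sqrt2\,\sin\!\Bigl(\frac{\pi}{2\sqrt2}\Bigr),
\]
and it shows the supremum is attained at $(p_0,q_0)$.

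\textbf{Upper bound (main obstacle).} I must show that $\sin(D/2)\le\sqrt2\sin(\pi/(2\sqrt2))\sin(\theta/2)$ for all pairs in $L$. I will parametrize a pair by its base distance $\alpha\in[0,\pi/2]$, where $|\langle w_1,w_2\rangle|=\cos\alpha$, and a relative fibre phase $\psi$. The polarization identity gives
\[
\cos\theta=\operatorname{Re}\langle w_1,w_2\rangle^2=\cos^2\alpha\,\cos 2\psi ,
\]
where the phase is defined modulo the $\pm$ identification. For $D$, I would not compute Berger geodesics exactly. Instead I will use the upper bound given by a concatenation: a horizontal geodesic of length $\sqrt2\,\alpha$, plus a fibre arc of length $2|\psi'|$, where $\psi'$ is the holonomy-corrected phase. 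I will also use the comparison $D\le\sqrt2\,d_{S^3/\pm}$, valid because the fibre factor $2$ exceeds the horizontal factor $\sqrt2$ only mildly.

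The cleanest route is probably the bound $D\le\sqrt2\,\beta$, where $\beta$ is the round distance in $S^3/\{\pm1\}$ chosen so that $\cos\beta=|\operatorname{Re}\langle w_1,w_2\rangle|$. The polarization identity gives $\cos\theta\ge 2\cos^2\beta-1=\cos 2\beta$, so $\theta\le 2\beta$, reducing the problem to the one-variable inequality
\[
\frac{\sin(\beta/\sqrt2)}{\sin\beta}\le \frac{\sin(\pi/(2\sqrt2))}{\sin(\pi/4)}\cdot\frac{\sin(\pi/4)}{\sin(\pi/4)}\quad\text{on }(0,\pi/2] ,
\]
i.e.\ that $g(\beta)=\sin(\beta/\sqrt2)/\sin\beta$ is increasing there, with endpoint value $\sqrt2\sin(\pi/(2\sqrt2))$ at $\beta=\pi/2$. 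This monotonicity is a routine calculus check. Checking that the relation $\cos\theta\ge\cos2\beta$ survives the $\pm$ quotient and the phase choices is where I expect the difficulty.
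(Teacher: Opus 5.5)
Your model and your lower bound are correct. The model consists of $\varphi$ as a $2{:}1$ cover $\C^2\setminus\{0\}\to X\setminus\{0\}$ with $\langle\varphi(w_1),\varphi(w_2)\rangle=\langle w_1,w_2\rangle^2$, the Berger description of $L=S^3/\{\pm1\}$, and the cone reduction to $r=s=1$. The lower bound uses the Hopf projection. These are the $SU(2)$ counterpart of the paper's $SO(3)$ model with the metric $Q$ and of its projection $(u,v)\mapsto v$.

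The upper bound, however, fails as proposed, for two reasons.

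First, $D\le\sqrt2\,\beta$ is false. The pulled-back metric on $S^3$ is $2g_H+4g_V\ge 2(g_H+g_V)$, so in fact $D\ge\sqrt2\,\beta$ for every pair, with strict inequality in general. For instance, $w_2=e^{i\psi}w_1$ with $0<\psi\le\pi/2$ has $\beta=\psi$ but $D=\theta=2\psi$. The fact that the fibre factor exceeds the horizontal one "only mildly" does not give an inequality in the direction you need.

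Second, $\theta\le 2\beta$ is an \emph{upper} bound on the outer angle, which can only bound the ratio $\sin(\min\{D,\pi\}/2)/\sin(\theta/2)$ from below. What is needed is a lower bound on $\sin(\theta/2)$, and the correct one is $\sin(\theta/2)\ge\sin\beta/\sqrt2$. Note also that $g(\pi/2)=\sin(\pi/(2\sqrt2))$, not $\sqrt2\sin(\pi/(2\sqrt2))$; that factor $\sqrt2$ is exactly what the correct lower bound on $\theta$ supplies.

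Even the correct one-sided estimates, $D\le 2\beta$ and $\sin(\theta/2)\ge\sin\beta/\sqrt2$, used separately only give $dist(X)\le\sqrt2$. That is neither sharp nor strict. The two estimates are extremal in opposite configurations (vertical versus horizontal displacement), so they cannot be decoupled.

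The missing idea is to couple the two estimates through a single parameter, which is what the paper does.
\begin{itemize}
\item Replace $w_2$ by $-w_2$ if necessary so that $\langle w_1,w_2\rangle=a+ib$ with $a=\cos\beta\ge0$.
\item Join $w_1$ to $w_2$ by the round great circle. Its velocity has constant vertical component $\kappa$, where $\kappa^2=b^2/(1-a^2)$.
\item Its Berger length is therefore $2s\beta$ with $s=\sqrt{(1+\kappa^2)/2}\in[1/\sqrt2,1]$, so $D\le 2s\beta\le\pi$.
\item Your polarization identity gives $\sin^2(\theta/2)=\tfrac12(1-a^2+b^2)=s^2\sin^2\beta$.
\end{itemize}
Hence the ratio is at most $\sin(s\beta)/(s\sin\beta)$, with the \emph{same} $s$ in numerator and denominator. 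The monotonicity of $\sin y/y$ pushes $s$ down to $1/\sqrt2$. Then $\sqrt2\sin(\beta/\sqrt2)/\sin\beta$ is increasing on $(0,\pi/2]$, with value $\sqrt2\sin(\pi/(2\sqrt2))$ at $\beta=\pi/2$.

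This is precisely the paper's Steps 4--6 in your coordinates. Your great circle is the lift of the one-parameter subgroup $\exp(t\hat n)$, $\beta$ is half the rotation angle, and $\kappa^2=n_3^2$.
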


\begin{proof}
\textbf{Step 1: the link and its inner metric.}
Write $w=u+iv$ with $u,v\in\mathbb{R}^3$. Since $\sum_j w_j^2=\|u\|^2-\|v\|^2+2i\langle u,v\rangle$,
$$X=\{(u,v):\ \|u\|=\|v\|,\ \langle u,v\rangle=0\},$$
a real cone over its link $L=X\cap\mathbb{S}^5=\{(u,v):\|u\|=\|v\|=\tfrac1{\sqrt2},\ u\perp v\}$, a compact smooth $3$-manifold.
The map $\Phi:SO(3)\to L$, $\Phi(R)=\tfrac1{\sqrt2}(Re_1,Re_2)$, is a diffeomorphism, with inverse
$(u,v)\mapsto(\sqrt2u\,|\,\sqrt2v\,|\,2\,u\times v)$ (matrix with these columns).

For $a\in\mathbb{R}^3$, let $\hat a\in\mathfrak{so}(3)$ denote the skew-symmetric matrix defined by
$\hat a\,x=a\times x$ for all $x\in\mathbb{R}^3$; the map $a\mapsto\hat a$ is a linear isomorphism
$\mathbb{R}^3\to\mathfrak{so}(3)$. 
For a $C^1$ curve $R(t)$ in $SO(3)$, let $\omega(t)\in\mathbb{R}^3$ be the unique vector such that $\hat\omega(t)=R(t)^{-1}\dot R(t)$. Thus
$\dot R=R\hat\omega$. Then
$\tfrac{d}{dt}\Phi(R)=\tfrac1{\sqrt2}\big(R(\omega\times e_1),R(\omega\times e_2)\big)$, so that
\begin{equation}\label{eq:metric}
\Big\|\tfrac{d}{dt}\Phi(R)\Big\|^2
=\tfrac12\big(\|\omega\times e_1\|^2+\|\omega\times e_2\|^2\big)
=\tfrac12(\omega_1^2+\omega_2^2)+\omega_3^2=:Q(\omega).
\end{equation}
Let $d_L$ be the inner metric of $L$ induced by the Euclidean metric. Since $L\subset\mathbb{S}^5$, every curve
in $L$ from $p$ to $q$ is at least as long as the great-circle arc, hence
\begin{equation}\label{eq:angle}
d_L(p,q)\ \ge\ \angle(p,q)\in[0,\pi].
\end{equation}
Finally, $SO(3)$ acts on $\mathbb{C}^3$ by $A(u+iv)=Au+iAv$; this is a real-linear isometry of $\mathbb{R}^6$
preserving $X$ and $L$, hence an isometry of $(X,d_X)$ and of $(L,d_L)$, and $A\,\Phi(R)=\Phi(AR)$ shows that the
action on $L$ is transitive.

\textbf{Step 2: the cone formula.}
Since $X$ is a cone over $L$, $(X,d_X)$ is the Euclidean cone over $(L,d_L)$ in the sense of
\cite[\S3.6.2]{BBI}:
\begin{equation}\label{eq:cone}
d_X(rp,sq)^2=r^2+s^2-2rs\cos\big(\min\{d_L(p,q),\pi\}\big),\qquad r,s\ge0,\ p,q\in L.
\end{equation}
Indeed, in polar coordinates $w=r\ell$ ($r>0$, $\ell\in L$), one has $\langle\ell,\dot\ell\rangle=0$, hence
$\|\dot w\|^2=\dot r^2+r^2\|\dot\ell\|^2$ for every Lipschitz curve in $X\setminus\{0\}$; thus the Euclidean length of
a curve in $X\setminus\{0\}$ equals its length in the cone over $(L,d_L)$, while a curve through $0$ has length
$\ge r+s$. In particular, for $p,q\in L$,
$d_X(p,q)=2\sin\big(\alpha(p,q)/2\big)$ with $\alpha(p,q):=\min\{d_L(p,q),\pi\}$.

\textbf{Step 3: reduction to pairs on the link.}
Let $p,q\in L$, $\theta=\angle(p,q)$, $\alpha=\alpha(p,q)$; by \eqref{eq:angle}, $\alpha\ge\theta$.
For $r,s>0$, by \eqref{eq:cone},
$$\frac{d_X(rp,sq)^2}{\|rp-sq\|^2}
=\frac{r^2+s^2-2rs\cos\alpha}{r^2+s^2-2rs\cos\theta}
=\frac{1-\mu\cos\alpha}{1-\mu\cos\theta},\qquad \mu=\frac{2rs}{r^2+s^2}\in(0,1].$$
The derivative of the right-hand side with respect to $\mu$ is $(\cos\theta-\cos\alpha)/(1-\mu\cos\theta)^2\ge0$,
so the ratio is maximal at $\mu=1$, i.e.\ $r=s$, and by homogeneity at $r=s=1$. Pairs with $p=q$ or with one point
equal to $0$ have ratio $1$. Using the transitivity of Step~1 and fixing $p_0=\Phi(I)=\tfrac1{\sqrt2}(e_1,e_2)$,
\begin{equation}\label{eq:red}
\operatorname{dist}(X)=\sup_{q\in L\setminus\{p_0\}}\frac{2\sin\big(\alpha(p_0,q)/2\big)}{\|p_0-q\|}.
\end{equation}

\textbf{Step 4: the outer distance.}
Let $q=\Phi(R)$ with $R\ne I$, and let $n\in\mathbb{S}^2$, $\theta\in(0,\pi]$ be the axis and angle of $R$, so that by
Rodrigues' formula $R=\cos\theta\,I+\sin\theta\,\hat n+(1-\cos\theta)\,nn^{T}$ (see e.g.\ \cite[\S2.2, Formula 2.14]{MLS}).
Since $\|e_j-Re_j\|^2=2-2R_{jj}$,
\begin{eqnarray*}
    \|p_0-q\|^2&=& \tfrac12\sum_{j=1}^2(2-2R_{jj})\\
               &=& 2-\operatorname{tr}R+R_{33}\\
               &=& 2-(1+2\cos\theta)+\cos\theta+(1-\cos\theta)n_3^2\\
               &=& (1-\cos\theta)(1+n_3^2).
\end{eqnarray*}
Hence, with
\begin{equation}\label{eq:s}
s:=\sqrt{\tfrac{1+n_3^2}{2}}\in\big[\tfrac1{\sqrt2},1\big],\qquad
\|p_0-q\|=2s\sin(\theta/2).
\end{equation}

\textbf{Step 5: upper bound for the inner distance.}
The curve $\gamma(t)=\Phi(\exp(t\hat n))$, $0\le t\le\theta$, joins $p_0$ to $q$ in $L$ and has constant body
velocity $\omega\equiv n$, because $\tfrac{d}{dt}\exp(t\hat n)=\exp(t\hat n)\,\hat n$. By \eqref{eq:metric} its speed
is $\sqrt{Q(n)}=\sqrt{\tfrac12(n_1^2+n_2^2)+n_3^2}=s$. Therefore
$$d_L(p_0,q)\le s\theta\le\pi ,$$
so $\alpha(p_0,q)=d_L(p_0,q)\le s\theta$, and since $y\mapsto 2\sin(y/2)$ is increasing on $[0,\pi]$,
\begin{equation}\label{eq:upper}
d_X(p_0,q)\le 2\sin(s\theta/2).
\end{equation}

\textbf{Step 6: the upper bound for $\operatorname{dist}(X)$.}
Put $x=\theta/2\in(0,\pi/2]$. By \eqref{eq:s} and \eqref{eq:upper},
$$\frac{d_X(p_0,q)}{\|p_0-q\|}\le\frac{\sin(sx)}{s\sin x}=\frac{x}{\sin x}\cdot\frac{\sin(sx)}{sx}.$$
The function $y\mapsto\sin y/y$ is decreasing on $(0,\pi]$ (its derivative $(y\cos y-\sin y)/y^2$ is negative,
as $\tan y>y$ on $(0,\pi/2)$ and $y\cos y\le0<\sin y$ on $[\pi/2,\pi)$). Since $sx\in(0,\pi/2]$ and
$s\ge1/\sqrt2$, the right-hand side is largest at $s=1/\sqrt2$, i.e.
$$\frac{d_X(p_0,q)}{\|p_0-q\|}\le f(x):=\sqrt2\,\frac{\sin(x/\sqrt2)}{\sin x}.$$
We claim $f$ is increasing on $(0,\pi/2]$. With $c=1/\sqrt2$ one has $f'(x)=\sqrt2\,h(x)/\sin^2x$, where
$h(x)=c\cos(cx)\sin x-\sin(cx)\cos x$ satisfies $h(0)=0$ and
$h'(x)=(1-c^2)\sin(cx)\sin x>0$ on $(0,\pi)$; hence $h>0$ on $(0,\pi/2]$ and the claim follows.
Consequently the ratio is at most $f(\pi/2)=\sqrt2\sin\big(\tfrac{\pi}{2\sqrt2}\big)$ for every $q\ne p_0$,
and by \eqref{eq:red}
$$\operatorname{dist}(X)\le\sqrt2\,\sin\!\Big(\frac{\pi}{2\sqrt2}\Big).$$

\textbf{Step 7: the bound is attained.}
Let $q_0=\Phi(\operatorname{diag}(1,-1,-1))=\tfrac1{\sqrt2}(e_1,-e_2)$, i.e.\ $n=e_1$, $\theta=\pi$, $s=1/\sqrt2$;
in complex coordinates $p_0=\tfrac1{\sqrt2}(1,i,0)$ and $q_0=\overline{p_0}$. By \eqref{eq:s},
$\|p_0-q_0\|=\sqrt2$, and by Step~5, $d_L(p_0,q_0)\le\pi/\sqrt2$. For the reverse inequality, the projection
$\pi_v:L\to\mathbb{S}^2(1/\sqrt2)$, $(u,v)\mapsto v$, does not increase the Euclidean length of curves, since
$\|\dot v\|\le\|(\dot u,\dot v)\|$; therefore
$$d_L(p_0,q_0)\ \ge\ d_{\mathbb{S}^2(1/\sqrt2)}\Big(\tfrac{e_2}{\sqrt2},-\tfrac{e_2}{\sqrt2}\Big)=\frac{\pi}{\sqrt2},$$
the two points being antipodal on a round sphere of radius $1/\sqrt2$, whose inner metric is the arc-length metric.
Thus $d_L(p_0,q_0)=\pi/\sqrt2<\pi$, so by Step~2, $d_X(p_0,q_0)=2\sin\big(\tfrac{\pi}{2\sqrt2}\big)$ and
$$\frac{d_X(p_0,q_0)}{\|p_0-q_0\|}=\sqrt2\,\sin\!\Big(\frac{\pi}{2\sqrt2}\Big).$$
Together with Step~6 this proves $\operatorname{dist}(X)=\sqrt2\sin\big(\tfrac{\pi}{2\sqrt2}\big)$; the inequality
$<\sqrt2$ is immediate since $\tfrac{\pi}{2\sqrt2}\notin\tfrac{\pi}{2}+2\pi\mathbb{Z}$.
\end{proof}

\begin{remark}
The quadratic form $Q$ in \eqref{eq:metric} is a left-invariant metric of Berger type on $SO(3)\cong L$.
The direction $\omega=e_3$ is the orbit of the circle action $w\mapsto e^{i\vartheta}w$ (the Hopf fibre of
$\mathbb{S}^5$ restricted to $L$), and it is \emph{longer} than the two transverse directions ($Q(e_3)=1$ versus
$Q(e_1)=Q(e_2)=\tfrac12$). The extremal pair $(p_0,\overline{p_0})$ is joined in $L$ by the half-turn about $e_1$,
a transverse direction, which is why the maximal distortion is governed by the factor $1/\sqrt2$.
\end{remark}

\subsection{Open question}

We conclude with the following question, motivated by this work.

\begin{enumerate}
\item[-] What is the sharp $c$-LNE rigidity threshold for complex algebraic 
  surfaces? That is, what is the largest $c(2)$ such that every connected 
  $c$-LNE complex algebraic surface with $c < c(2)$ must be an affine 
  linear subspace?
\end{enumerate}

\bigskip

\noindent {\bf Acknowledgments.} During the preparation of this work, the authors utilized AI assistants (Gemini and Claude) to improve language and readability, organize ideas and proofs. All AI-generated suggestions were independently verified by the authors, who take full responsibility for the content and accuracy of the final manuscript.


\end{document}